\newtheorem{theorem}{Theorem}[section]
\newtheorem{pro}{Proposition}[section]
\newtheorem{proposition}[theorem]{Proposition}
\begin{document}

\title{Decay properties of the Hardy-Littlewood-Sobolev systems
of the Lane-Emden type}
\author{Yutian Lei and Congming Li}

\date{}
\maketitle

\begin{abstract}
In this paper, we study the asymptotic behavior of positive solutions
of the nonlinear differential systems of Lane-Emden type $2k$-order equations
$$
\left \{
   \begin{array}{l}
      (-\Delta)^k u=v^q,u>0 \quad in ~R^n,\\
      (-\Delta)^k v=u^p,v>0 \quad in ~R^n,
   \end{array}
   \right.
$$
and the Hardy-Littlewood-Sobolev (HLS) type system of nonlinear equations
$$
\left \{
   \begin{array}{l}
      u(x)=\displaystyle\int_{R^n}\frac{v^q(y)dy}{|x-y|^{n-\alpha}},u>0 \quad in ~R^n,\\
      v(x)=\displaystyle\int_{R^n}\frac{u^p(y)dy}{|x-y|^{n-\alpha}},u>0 \quad in ~R^n.
   \end{array}
   \right.
$$
Such an integral system is related to the study the extremal functions of the HLS inequality.
We point out that the bounded solutions $u,v$ converge to zero either with
the fast decay rates or with the slow decay rates when $|x| \to \infty$ under some assumptions.
In addition, we also find a criterion to distinguish the fast and the slow decay
rates: if $u,v$ are the integrable solutions (i.e. $(u,v) \in L^{r_0}(R^n)
\times L^{s_0}(R^n)$), then they decay fast; if the bounded solutions $u,v$
are not the integrable solutions (i.e. $(u,v) \not\in L^{r_0}(R^n)
\times L^{s_0}(R^n)$), then they decay almost slowly. Here, for the HLS type system,
$r_0=\frac{n(pq-1)}{\alpha(q+1)}$, $s_0=\frac{n(pq-1)}{\alpha(p+1)}$; and for
the Lane-Emden type system, $r_0,s_0$ are still the forms above where $\alpha$
is replaced by $2k$.
\end{abstract}

\noindent{\bf{Keywords}}: Lane-Emden equations, Hardy-Littlewood-Sobolev type integral
equations, decay rates, finite energy solution, bounded decaying solution

\noindent{\bf{MSC2010}}: 35J48, 45E10, 45G05

\section{Introduction }

Let $n \geq 3$ and $p>1$. In this paper, we are concerned with the asymptotic behavior of positive solutions
of the Hardy-Littlewood-Sobolev (HLS) type system of nonlinear equations
\begin{equation} \label{hls}
\left \{
   \begin{array}{l}
      u(x)=\displaystyle\int_{R^n}\frac{v^q(y)dy}{|x-y|^{n-\alpha}},u>0 \quad in ~R^n,\\
      v(x)=\displaystyle\int_{R^n}\frac{u^p(y)dy}{|x-y|^{n-\alpha}},v>0 \quad in ~R^n.
   \end{array}
   \right.
\end{equation}
Under the assumption of the non-subcritical condition
\begin{equation} \label{ncc}
\frac{1}{p+1}+\frac{1}{q+1} \leq \frac{n-\alpha}{n},
\end{equation}
we obtain that if $u,v$ are the
integrable solutions, then they converge to $0$ with the fast decay rates
when $|x| \to \infty$. Moreover, we also point out that the equivalence relation of the
integrable solutions, the finite energy solutions, and the bounded solutions with fast decay
rates. On the other hand, we prove that the bounded
solutions decay almost slowly if the solutions are not integrable solutions.
Those decay rates are helpful to understand the existence of
positive solutions: in the supercritical case, the energy of
positive solutions is infinite and hence the variational methods
cannot use to investigate the existence. We can search for
positive solutions in the functions class whose elements decay
with the slow rates.

Recall the asymptotic behavior of the
positive solutions of the Lane-Emden equation
\begin{equation} \label{le-s}
-\Delta u=u^p, u>0 \quad in ~ R^n.
\end{equation}
(R1): When $|x| \to \infty$, then $u(x)$ converges to zero either fast
by $u(x) \simeq |x|^{2-n}$ or slowly by $u(x) \simeq |x|^{-\frac{2}{p-1}}$
(cf. \cite{YiLi}).

Here $f(x) \simeq g(x)$ means there exists $C>0$ such that
$\frac{g(x)}{C} \leq f(x) \leq Cg(x)$ when $|x| \to \infty$. Similar
results are also found in \cite{Lei} and \cite{LLD}.

We expect to generalize this result (R1) to the positive solutions
of the higher-order Lane-Emden type systems
\begin{equation} \label{2ks}
\left \{
   \begin{array}{l}
      (-\Delta)^k u=v^q, u>0 \quad in ~R^n,\\
      (-\Delta)^k v=u^p, v>0 \quad in ~R^n.
   \end{array}
   \right.
\end{equation}
Here $k\in [1,n/2)$ is an integer, $p,q>0$ and $pq>1$.
The classification of the solutions of (\ref{2ks})
has provided an important ingredient in the study of the
prescribing scalar curvature problem. The positive solutions of
(\ref{2ks}) and its corresponding single equation were studied rather
extensively (cf.\cite{CGS}, \cite{CY}, 
\cite{CL1}, \cite{GNN}, \cite{Li}, \cite{Lin}, \cite{WX}
and the references therein).

The decay rates of the positive solutions play an important
role in study the properties of the Lane-Emden
type PDEs (cf. \cite{DdMW}, \cite{LeiLi} and \cite{Z-Tran}).
Recently, Chen and Li \cite{CL3} proved the equivalence between
(\ref{2ks}) and the system involving the Riesz potentials
$$\left \{
   \begin{array}{l}
      u(x)=\displaystyle\int_{R^n}\frac{v^q(y)dy}{|x-y|^{n-2k}},u>0 \quad in ~R^n,\\
      v(x)=\displaystyle\int_{R^n}\frac{u^p(y)dy}{|x-y|^{n-2k}},v>0 \quad in ~R^n.
   \end{array}
   \right.
$$
Thus, we investigate the more general Hardy-Littlewood-Sobolev
(HLS) type integral system (\ref{hls})
where $\alpha \in (0,n)$ and $p,q>0$, $pq>1$.
The positive solutions $u,v$ of (\ref{hls}) are called the {\it
integrable solutions} if $(u,v) \in L^{r_0}(R^n) \times
L^{s_0}(R^n)$. Here
$$
r_0=\frac{n(pq-1)}{\alpha(q+1)}, \quad
s_0=\frac{n(pq-1)}{\alpha(p+1)},
$$
Moreover, if the critical condition
\begin{equation} \label{cc}
\frac{1}{p+1}+\frac{1}{q+1}=\frac{n-\alpha}{n}
\end{equation}
holds, then $r_0=p+1$ and $s_0=q+1$. The positive solutions
$(u,v) \in L^{p+1}(R^n) \times L^{q+1}(R^n)$ are called the {\it finite
energy solutions}.

The system (\ref{hls}) is related to the Euler-Lagrange system of
the extremal functions of the HLS inequality (cf. \cite{ChLO},
\cite{YLi}, \cite{Lieb}). For the finite energy solutions,
Chen, Li and Ou \cite{CLO-CPDE} proved the radial symmetry. Jin
and Li \cite{JL} obtained the optimal integrability intervals.
Hang \cite{Hang} proved the smoothness. The fast decay rates was obtained
in \cite{LLM-CV}. For the integrable solutions,
Chen and Li \cite{CL-DCDS} proved the radial symmetry. In this paper, we will establish
the integrability and the estimate the decay rates.

Recall some existence results. An important conjecture is that the HLS
type systems (\ref{hls}) has no any positive solution under the
subcritical condition:
\begin{equation} \label{succ}
\frac{1}{p+1}+\frac{1}{q+1}>\frac{n-\alpha}{n}.
\end{equation}
When $\alpha=2$, it is the well known Lane-Emden conjecture. It is still open except for
$n \leq 4$ (cf. \cite{M-DIE}, \cite{SZ-DIE}, \cite{Souplet}).
Chen and Li \cite{CL-DCDS} proved the nonexistence
of the integrable solutions of (\ref{hls}). The nonexistence of
the radial solution can be seen in \cite{CDM}
and \cite{LGZ}.

\vskip 3mm

In the critical case, we have the following result.

\begin{pro} \label{prop1.1}
(Theorem 1.2 in \cite{LeiLi}) The system (\ref{hls}) has the finite energy solutions
if and only if the critical condition (\ref{cc}) holds.
\end{pro}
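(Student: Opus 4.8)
\medskip
\noindent\textbf{Proof proposal.}
The plan is to establish the two implications by entirely different means: the ``if'' direction by exhibiting finite energy solutions as Euler--Lagrange functions of the Hardy--Littlewood--Sobolev extremal problem, and the ``only if'' direction by a Pohozaev-type (dilation) identity for the integral system, whose boundary terms are controlled by the finite energy hypothesis.

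For sufficiency, assume the critical condition (\ref{cc}) and set $r=\frac{p+1}{p}$, $s=\frac{q+1}{q}$, which lie in $(1,\infty)$ because $p,q>0$. A one-line computation rewrites (\ref{cc}) as $\frac1r+\frac1s+\frac{n-\alpha}{n}=2$, which is exactly the admissibility relation in the HLS inequality $\big|\iint_{R^n\times R^n}\frac{f(x)g(y)}{|x-y|^{n-\alpha}}\,dx\,dy\big|\le C(n,\alpha,r)\|f\|_{r}\|g\|_{s}$. By Lieb's theorem on the existence of HLS extremals, the supremum of $\iint\frac{fg}{|x-y|^{n-\alpha}}$ over $0\le f,g$ with $\|f\|_r=\|g\|_s=1$ is attained at some pair $(f_0,g_0)$. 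I would then write down the Euler--Lagrange equations for this constrained maximization: using $r-1=1/p$ and $s-1=1/q$ they take the form $\int\frac{g_0(y)\,dy}{|x-y|^{n-\alpha}}=\lambda f_0^{1/p}$ and $\int\frac{f_0(y)\,dy}{|x-y|^{n-\alpha}}=\mu g_0^{1/q}$ for positive multipliers $\lambda,\mu$. Setting $u=\lambda f_0^{1/p}$ and $v=\mu g_0^{1/q}$ then gives the first equation of (\ref{hls}) exactly and the second one up to a positive constant, which a dilation $u\mapsto a u$, $v\mapsto b v$ (the two resulting equations for $a,b$ being solvable because $pq\ne 1$) normalizes. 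Since $\int u^{p+1}=\lambda^{p+1}\int f_0^{r}<\infty$ and likewise $\int v^{q+1}<\infty$, the pair $(u,v)$ is a finite energy solution.

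For necessity, let $(u,v)\in L^{p+1}(R^n)\times L^{q+1}(R^n)$ solve (\ref{hls}). First I would run the standard regularity/decay bootstrap on the integral equations (using the HLS inequality for the integrability gain and the smoothness result of Hang), so that $u,v$ are positive, $C^1$, and decay enough that all the integrals below converge absolutely and the spherical integrals $R\int_{\partial B_R}u^{p+1}\,dS$ and $R\int_{\partial B_R}v^{q+1}\,dS$ have $\liminf=0$ as $R\to\infty$ --- the latter being automatic from $\int_{R^n}u^{p+1}=\int_0^\infty\big(\int_{\partial B_R}u^{p+1}\,dS\big)\,dR<\infty$. Multiplying the two equations of (\ref{hls}) by $u^p$ and $v^q$ and using the symmetry of the kernel yields the energy identity $\int u^{p+1}=\int v^{q+1}=\iint\frac{u^p(x)v^q(y)}{|x-y|^{n-\alpha}}\,dx\,dy=:G\in(0,\infty)$. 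The key step is then the dilation identity: replacing $u(\cdot)$ and $v(\cdot)$ by $u(\lambda\,\cdot)$ and $v(\lambda\,\cdot)$ inside $G$ scales it by $\lambda^{-n-\alpha}$, so $\frac{d}{d\lambda}\big|_{\lambda=1}G_\lambda=-(n+\alpha)G$; computing the same derivative by differentiating under the integral sign, collapsing the inner integrals through the equations, and integrating by parts (boundary terms vanishing along a sequence $R_j\to\infty$) gives $-(n+\alpha)G=-\frac{np}{p+1}\int u^{p+1}-\frac{nq}{q+1}\int v^{q+1}=-nG\big(2-\frac1{p+1}-\frac1{q+1}\big)$. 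Dividing by $-nG>0$ leaves precisely $\frac1{p+1}+\frac1{q+1}=\frac{n-\alpha}{n}$, i.e.\ (\ref{cc}).

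The hard part is the analytic bookkeeping in the necessity argument rather than anything conceptual: one must justify the differentiation under the integral sign and, above all, the vanishing of the boundary terms in the integration by parts. This is exactly where the finite energy hypothesis is indispensable --- it makes $G$ finite and forces $R\int_{\partial B_R}u^{p+1}\,dS$ to have $\liminf=0$ --- but to turn the formal computation into a proof it has to be supplemented with pointwise decay estimates for $u,v$ and $\nabla u,\nabla v$ derived from the nonlocal equations, so that every intermediate integral converges and the passage to the limit $R_j\to\infty$ is legitimate.
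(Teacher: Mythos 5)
The paper does not actually prove Proposition \ref{prop1.1}: it is imported verbatim as Theorem 1.2 of \cite{LeiLi}, and the only comment offered is the sentence following it, namely that the ``if'' direction comes from Lieb's extremal functions. So there is no in-paper proof to compare against; judged on its own, your two-pronged plan is the standard and correct one, and it matches what the cited sources do. Your sufficiency argument (HLS admissibility $\frac1r+\frac1s+\frac{n-\alpha}{n}=2$ with $r=\frac{p+1}{p}$, $s=\frac{q+1}{q}$, Lieb's existence of maximizers, Euler--Lagrange equations, and a two-parameter rescaling solvable because $pq\neq1$) is exactly the route the paper gestures at, and the computation $\int u^{p+1}=\lambda^{p+1}\int f_0^{r}<\infty$ closes it. Your necessity argument is the Pohozaev/dilation identity for integral systems, which is how Chen--Li \cite{CL-DCDS} and \cite{LeiLi} prove such statements; the arithmetic $n+\alpha=n\bigl(2-\frac1{p+1}-\frac1{q+1}\bigr)$ does yield (\ref{cc}).

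Two cautions on the necessity step, both within the ``analytic bookkeeping'' you flag but worth making explicit. First, the decay and gradient estimates you propose to feed into the boundary-term analysis cannot be borrowed from Theorem \ref{th1.2} or from the integrability results of \cite{JL}, since those are proved under the non-subcritical or critical condition --- precisely what you are trying to establish; everything must be extracted from $(u,v)\in L^{p+1}\times L^{q+1}$ and the integral representation alone, which is a genuine (if standard) piece of work. Second, your version requires the global absolute convergence of $\int_{R^n}|x|\,|\nabla u|\,u^{p}\,dx$ to justify differentiating under the integral sign; the usual way to sidestep this is to differentiate the kernel instead of $G_\lambda$, i.e.\ write $x\cdot\nabla u(x)=(\alpha-n)\int\frac{x\cdot(x-y)\,v^q(y)}{|x-y|^{n-\alpha+2}}dy$, add the symmetric term for $v$, and use $x\cdot(x-y)+y\cdot(y-x)=|x-y|^2$ to collapse the sum to $(\alpha-n)G$ directly, carrying out the integration by parts on $B_R$ and killing the boundary and cross terms along a sequence $R_j\to\infty$ via your $\liminf$ observation. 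With that modification the argument is the one in the cited reference.
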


So the existence was proved by Lieb who pointed out that the extremal functions
of the HLS inequality solve (\ref{hls}) (cf. \cite{Lieb}).

Proposition \ref{prop1.1} implies that the energy of the solutions is
infinite in the supercritical case. Therefore, it seems difficult
to prove the existence of positive solutions by the variational
methods.

For the scalar equation of (\ref{2ks}) with $k=1$
$$
-\Delta u =u^p,
$$
paper \cite{DdMW} shows
the existence of positive solutions with slow decay rate in the
supercritical case.
Recently, Li \cite{Li2011} proved the existence of positive solutions of (\ref{2ks}) under the
supercritical condition
$$
\frac{1}{p+1}+\frac{1}{q+1}<\frac{n-2k}{n}
$$
by means of the shooting method. Therefore, we always assume
in this paper the non-subcritical condition (\ref{ncc}) holds.

Next, we list the decay results stated by four theorems.

\begin{theorem} \label{th1.1}
Let $p\leq q$, and $u,v$ be positive solutions of (\ref{hls}).
Then there exists $c>0$ such that as $|x| \to \infty$,
$$
u(x) \geq \frac{c}{|x|^{n-\alpha}};
\quad
v(x) \geq \frac{c}{|x|^{\min\{n-\alpha,pn-(p+1)\alpha\}}}.
$$
Moreover, if $u,v$ are bounded decaying solutions, and
there exists some $\epsilon_0>0$
such that for $|y| \leq |x|$, $u(y) \geq \epsilon_0 u(x)$ or
$v(y) \geq \epsilon_0 v(x)$,
then there exists $C>0$ such that as $|x| \to \infty$,
$$
u(x) \leq C|x|^{-\frac{\alpha(q+1)}{pq-1}};
\quad
v(x) \leq C|x|^{-\frac{\alpha(p+1)}{pq-1}}.
$$
\end{theorem}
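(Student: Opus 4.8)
The plan is to establish the lower bounds first, then the upper bounds, treating the two as essentially independent arguments. For the lower bound on $u$, I would start from the integral representation $u(x)=\int_{R^n}|x-y|^{\alpha-n}v^q(y)\,dy$ and restrict the integration to a fixed ball $B_1(0)$ (or any ball where $v$ is bounded below by a positive constant, which exists since $v>0$ is continuous). On that ball, for $|x|$ large one has $|x-y|\le 2|x|$, so $u(x)\ge c\int_{B_1(0)}v^q(y)\,dy\cdot|x|^{\alpha-n}\ge c'|x|^{\alpha-n}$, which gives the stated bound on $u$. For the lower bound on $v$, I would feed this back into $v(x)=\int_{R^n}|x-y|^{\alpha-n}u^p(y)\,dy$. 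Using $u(y)\ge c|y|^{\alpha-n}$ for $|y|$ large, I would estimate $v(x)\ge c\int_{|y|\ge R}|x-y|^{\alpha-n}|y|^{p(\alpha-n)}\,dy$; splitting this integral according to $|y|\le 2|x|$ and $|y|>2|x|$ and computing the resulting power-type integrals yields the exponent $\min\{n-\alpha,\,pn-(p+1)\alpha\}$ (the two regimes correspond to which power dominates, i.e.\ whether $p(n-\alpha)$ exceeds $n$ or not — hence the minimum). This part is routine potential-theoretic bookkeeping.

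For the upper bounds, the strategy is a bootstrap/iteration on the decay rate. Assume first that $u(x)\le C|x|^{-a}$ and $v(x)\le C|x|^{-b}$ for some (small) initial exponents — boundedness plus the integrability or the hypothesis on $u,v$ should give a starting point, possibly $a=b=0$ refined by a first pass. Plugging $v^q(y)\le C|y|^{-bq}$ into the representation for $u$ and again splitting the integral over $|y|\le |x|/2$, $|x|/2<|y|<2|x|$, and $|y|>2|x|$, one gets $u(x)\le C|x|^{-a'}$ with an improved exponent $a'$ depending on $b$ and $n,\alpha$; symmetrically $b'$ from $a$ via the $u^p$ equation. The map $(a,b)\mapsto(a',b')$ is affine, and iterating it drives $(a,b)$ toward the fixed point $\bigl(\tfrac{\alpha(q+1)}{pq-1},\tfrac{\alpha(p+1)}{pq-1}\bigr)$, which is exactly the claimed rate. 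The non-subcritical condition (\ref{ncc}) is what makes the iteration converge to this fixed point rather than overshooting into non-integrable territory, and the auxiliary hypothesis ($u(y)\ge\epsilon_0 u(x)$ or $v(y)\ge\epsilon_0 v(x)$ for $|y|\le|x|$, i.e.\ a mild monotonicity) is needed to control the contribution of the inner region $|y|\le|x|/2$ — there one cannot use a decay bound to beat the singularity of the kernel, so one instead compares $v^q(y)$ (or $u^p(y)$) to its value near $|x|$ via the $\epsilon_0$-hypothesis and absorbs it.

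The main obstacle I anticipate is precisely this inner-region estimate in the upper-bound iteration: near $y=0$ the integrand $|x-y|^{\alpha-n}v^q(y)$ sees the full (possibly large, since only bounded) values of $v$, and a naive decay bound does not help. Making the $\epsilon_0$-hypothesis do its job — converting the contribution of $\{|y|\le|x|/2\}$ into something comparable to $|x|^{\alpha-n}\int v^q$, hence into a term of the same order as the trivial lower bound, and then showing this is dominated by the bootstrapped rate at every stage of the iteration — is the delicate point. A secondary technical issue is checking that each iteration step genuinely improves the exponent (strict contraction of the affine map toward the fixed point), which requires verifying an inequality equivalent to $pq>1$ together with (\ref{ncc}); once the contraction is in place, passing to the limit and reading off the exponents $\frac{\alpha(q+1)}{pq-1}$ and $\frac{\alpha(p+1)}{pq-1}$ is immediate.
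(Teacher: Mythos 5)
Your lower-bound argument is fine and is essentially the paper's (Proposition \ref{prop5.1}): bound $v$ below on a fixed ball to get $u(x)\geq c|x|^{\alpha-n}$, feed that back into the equation for $v$, and read off the two competing exponents. The problem is the upper-bound half.

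The bootstrap you propose cannot work, because the affine map on decay exponents goes the wrong way. If $u(x)\leq C|x|^{-a}$ and $v(x)\leq C|x|^{-b}$, the integral representation yields (at best) $a'=qb-\alpha$ and $b'=pa-\alpha$, so the composed map is $a\mapsto pq\,a-\alpha(q+1)$. Its fixed point is indeed $a^{*}=\frac{\alpha(q+1)}{pq-1}$, but since $pq>1$ the map is an \emph{expansion} about $a^{*}$, not a contraction: starting from the only available initial data $a=b=0$ (boundedness), the first step gives $a_{1}=-\alpha<0$, i.e.\ no information, and further iterates move monotonically away from $a^{*}$. The condition $pq>1$ that you invoke to make the iteration converge is precisely what makes it diverge. (The paper does exploit this expansion, but in the opposite direction and for the opposite purpose: in Proposition \ref{prop5.3} it iterates \emph{lower} bounds with exponent below the slow rate until the exponent turns negative, forcing $u\equiv\infty$ — a contradiction argument, not a convergent bootstrap.) You have also misread the role of the $\epsilon_{0}$-hypothesis: it is not there to tame the inner region $|y|\leq|x|/2$ in an upper-bound estimate.

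What the paper actually does (Proposition \ref{prop5.4}) is a one-step absorption. Using $v(y)\geq\epsilon_{0}v(x)$ for all $|y|\leq|x|$ together with $|x-y|\leq 2|x|$ on that ball, one gets the pointwise inequality
$$
u(x)\;\geq\;\int_{B(0,|x|)\setminus B(0,R)}\frac{v^{q}(y)\,dy}{|x-y|^{n-\alpha}}\;\geq\;c\,v^{q}(x)\,|x|^{\alpha},
$$
and symmetrically $v(x)\geq c\,u^{p}(x)|x|^{\alpha}$. Composing the two gives $u(x)\geq c\,u^{pq}(x)|x|^{\alpha(q+1)}$, and now dividing by $u^{pq}(x)>0$ — legitimate exactly because $pq>1$ leaves a positive power of $u$ on the left — yields $u(x)\leq C|x|^{-\alpha(q+1)/(pq-1)}$ immediately, and likewise for $v$. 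This self-improving pointwise inequality is the missing idea; no iteration on exponents is needed, and none would succeed.
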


\paragraph{Remark 1.1.}
When $\alpha>2$, $u,v$ are monotonicity decreasing and hence satisfy the condition
in Theorem \ref{th1.1} (2), as long as $u,v$ are radially symmetric or bounded.
On the contrary, if the radial solutions $u,v$ are not bounded, then
(\ref{hls}) has the singular solutions $(u,v)=(C|x|^{-\frac{\alpha(q+1)}{pq-1}},
C|x|^{-\frac{\alpha(p+1)}{pq-1}})$ with some $C>0$.

\vskip 3mm

Let $p \leq q$. According to
Theorem 1.5 (2) in \cite{LeiLi}, we know that $pq>1$ and $\frac{\alpha(q+1)}{pq-1}
< n-\alpha$. This implies $\frac{\alpha(p+1)}{pq-1}<\min\{n-\alpha,pn-(p+1)\alpha\}$.
When $|x| \to \infty$,
the exponents $n-\alpha$ and $\min\{n-\alpha,pn-(p+1)\alpha\}$ of $|x|^{-1}$ are called
the fast decay rates of $u$ and $v$ respectively. The exponents $\frac{\alpha(q+1)}{pq-1}$
and $\frac{\alpha(p+1)}{pq-1}$ are called the slow ones of $u$ and $v$.

Theorem \ref{th1.1} shows that the decay rates of $u,v$ cannot be larger than the fast
rates. Moreover, if $u$ or $v$ has some monotonicity, then their decay exponents
must be between the fast and the slow rates.

The following result shows that if $u,v$ are the integrable solutions, then they
decay fast.

\begin{theorem} \label{th1.2}
Let $p\leq q$, and $(u,v)$ be a pair of positive solutions of
(\ref{hls}) with the non-subcritical condition (\ref{ncc}).
The following three items are equivalent:

(1) $(u,v) \in L^{r_0}(R^n) \times L^{s_0}(R^n)$, i.e. $u,v$ are the
integrable solutions.

(2) $u,v$ are bounded, and decay fast when $|x| \to \infty$:
$$
u(x) \simeq |x|^{\alpha-n};
$$
$$\begin{array}{lll}
&v(x)\simeq |x|^{\alpha-n} \quad &if ~p(n-\alpha)>n;\\
&v(x)\simeq |x|^{\alpha-n}\ln|x| \quad &if ~p(n-\alpha)=n;\\
&v(x)\simeq |x|^{(\alpha-n)(p+1)+n} \quad &if ~p(n-\alpha)<n.
\end{array}
$$

(3) $(u,v) \in L^{p+1}(R^n) \times L^{q+1}(R^n)$, i.e. $u,v$ are the finite
energy solutions.
\end{theorem}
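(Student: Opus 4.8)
The plan is to prove the cyclic chain of implications $(1)\Rightarrow(3)\Rightarrow(2)\Rightarrow(1)$, with the decay estimates in (2) being the technical heart.

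\textbf{Step 1: $(1)\Rightarrow(3)$.} Assume $(u,v)\in L^{r_0}\times L^{s_0}$. The goal is first to bootstrap to $u,v\in L^\infty$. I would apply the regularity-lifting / bootstrap machinery familiar from the HLS literature (Chen--Li--Ou): writing $u(x)=\int |x-y|^{\alpha-n}v^q\,dy$, a truncation argument at a level $A$ splits the kernel into a piece with small $L^{n/(n-\alpha)}$-weak norm (on $\{v>A\}$) and a bounded remainder, and the HLS inequality then improves the integrability exponent of $u$ in terms of that of $v$ and vice versa. Iterating this lifting argument (using $pq>1$ and the non-subcritical condition \eqref{ncc} to keep the exponents moving in the right direction) pushes $u,v$ into $L^\infty$. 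Once $u,v$ are bounded and decay to zero, I then need to show they in fact lie in $L^{p+1}\times L^{q+1}$. Under the critical condition this is immediate since $r_0=p+1$, $s_0=q+1$; under the strictly non-subcritical (supercritical) condition the boundedness plus an interpolation between $L^{r_0}$ and $L^\infty$ using the precise integrability exponents yields $u\in L^{p+1}$, $v\in L^{q+1}$. Actually the cleanest route is to first establish the decay rates of (2) and then read off (3) by integrating the power $|x|^{\alpha-n}$ (and its variants) against the relevant exponent; so I would prove (2) en route.

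\textbf{Step 2: $(1)\Rightarrow(2)$, the decay rates.} With $u,v$ now bounded and $u,v\to 0$, and knowing $v^q\in L^1$ (which follows from $v\in L^{s_0}$ together with boundedness, since $s_0$ and $q$ combine to give $q\le s_0$ after the non-subcritical condition, or by interpolation), I estimate $u(x)=\int|x-y|^{\alpha-n}v^q(y)\,dy$. Split the integral over $|y|\le|x|/2$, $|y|\ge 2|x|$, and the annulus $|x|/2\le|y|\le 2|x|$. On the first region $|x-y|\simeq|x|$ so that piece is $\simeq|x|^{\alpha-n}\|v^q\|_{L^1}$; on the far region one uses the decay of $v$; the annular region is controlled using $v\to 0$ and the local integrability of $|x-y|^{\alpha-n}$. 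This gives the upper bound $u(x)\le C|x|^{\alpha-n}$, and the matching lower bound $u(x)\ge c|x|^{\alpha-n}$ is the first half of Theorem \ref{th1.1}. For $v(x)=\int|x-y|^{\alpha-n}u^p(y)\,dy$ the behavior of $u^p\simeq|x|^{p(\alpha-n)}$ at infinity controls the far-field: if $p(n-\alpha)>n$ then $u^p\in L^1$ and $v\simeq|x|^{\alpha-n}$; if $p(n-\alpha)=n$ the logarithmically divergent tail produces the $|x|^{\alpha-n}\ln|x|$ rate; if $p(n-\alpha)<n$ the tail $\int_{|y|\le|x|}|x-y|^{\alpha-n}|y|^{p(\alpha-n)}\,dy\simeq|x|^{(\alpha-n)(p+1)+n}$ dominates. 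Each of the three cases is a standard but careful region-splitting computation; the $=$ case requires the most bookkeeping.

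\textbf{Step 3: $(2)\Rightarrow(1)$ and $(3)\Rightarrow(2)$.} The implication $(2)\Rightarrow(1)$ is a direct integration: the stated decay rates, together with local boundedness, make $\int u^{r_0}$ and $\int v^{s_0}$ convergent (one checks $r_0(n-\alpha)>n$ and the analogous inequalities for $v$ in each of the three regimes, which follow from $p\le q$, $pq>1$, and \eqref{ncc} as already noted after Theorem \ref{th1.1}). For $(3)\Rightarrow(2)$: starting from $u\in L^{p+1}$, $v\in L^{q+1}$ one first runs the same regularity-lifting argument as in Step 1 — the exponents $p+1,q+1$ sit in the admissible range so the bootstrap to $L^\infty$ still works — and then Step 2 applies verbatim. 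Thus all three are equivalent.

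\textbf{Main obstacle.} The genuine difficulty is Step 2: controlling the annular region $|x|/2\le|y|\le 2|x|$ in the convolution when the kernel is singular, and handling the borderline case $p(n-\alpha)=n$ where the logarithm appears — here one must extract the leading term of $\int_{|y|\le|x|}|x-y|^{\alpha-n}u^p(y)\,dy$ sharply enough to see $\ln|x|$ rather than just a two-sided bound by powers. The bootstrap in Steps 1 and 3 is also delicate in that one must verify the exponent recursion converges to $\infty$ under \eqref{ncc} rather than stalling, but this is by now a well-understood technique and I would cite the analogue in \cite{CLO-CPDE} or \cite{JL} for the mechanics.
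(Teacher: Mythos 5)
Your overall architecture (a cycle of implications, with the decay rates as the technical core) matches the paper, and your Steps 1 and 3 correspond to the paper's Sections 2--3 (regularity lifting as in Theorem \ref{th2.1}, boundedness as in Theorem \ref{th3.1}, and the direct integrations of Propositions \ref{prop4.6}--\ref{prop4.7}). But there is a genuine gap at the point you yourself flag as the heart of the matter: the annular region $|x|/2\le|y|\le 2|x|$ in the estimate of $u(x)=\int|x-y|^{\alpha-n}v^q(y)\,dy$. You propose to control it ``using $v\to 0$ and the local integrability of $|x-y|^{\alpha-n}$,'' but this only yields
$$
\int_{B(x,|x|/2)}\frac{v^q(y)\,dy}{|x-y|^{n-\alpha}}
\;\le\; \sup_{B(x,|x|/2)}v^q\cdot C|x|^{\alpha}\;=\;o(1)\cdot|x|^{\alpha},
$$
which is far from the required $O(|x|^{\alpha-n})$. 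One needs a \emph{quantitative} pointwise decay of $v$ on the annulus, roughly $v(y)\lesssim |y|^{-n/s}$ with $q\cdot n/s>n$. The paper obtains this (Proposition \ref{prop4.2}) by invoking the radial symmetry and monotone decrease of integrable solutions (Theorem 3.1 of \cite{CL-DCDS}), which converts the lifted integrability $v\in L^s$ for $1/s$ near $\frac{1}{n}\min\{n-\alpha,\,p(n-\alpha)-\alpha\}$ into the pointwise bound $\tilde v^s(|x|/2)|x|^n\le C$, and then checks the exponent inequalities $q(n-\alpha)>n$ and $q[p(n-\alpha)-\alpha]>n$ from (\ref{ncc}) and $p\le q$. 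Without this monotonicity input (or a substitute Harnack-type estimate) your annulus term is not under control, and the same issue recurs in all three regimes for $v$.

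Two smaller points. First, your justification that $v^q\in L^1$ by ``interpolation between $L^{s_0}$ and $L^\infty$'' fails in the critical case, where $s_0=q+1>q$: interpolation with $L^\infty$ only gives exponents \emph{above} $s_0$, whereas $L^q$ lies below; one needs the downward extension of the integrability interval (Theorem \ref{th2.1} plus the computation of Proposition \ref{prop4.1} showing $\frac1q<\min\{\frac{n-\alpha}{n},\frac{pn-(p+1)\alpha}{n}\}$). Second, in $(3)\Rightarrow(2)$ you should note at the outset that Proposition \ref{prop1.1} forces the critical condition (\ref{cc}) whenever finite energy solutions exist, so that $p+1=r_0$, $q+1=s_0$; this is how the paper (Theorem \ref{th2.3}) places the exponents in the admissible range, rather than by rerunning the bootstrap from scratch.
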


\paragraph{Remark 1.2.}
It should be pointed out that the condition $p \leq q$ in Theorems
\ref{th1.1} and \ref{th1.2} is not essential.
If $q \leq p$, then the conclusions also hold as long as the positions of
$u$ and $v$ are exchanged.

\vskip 3mm

According to Proposition \ref{prop1.1}, if the supercritical condition
\begin{equation} \label{ucc}
\frac{1}{p+1}+\frac{1}{q+1}<\frac{n-\alpha}{n}
\end{equation}
holds, then the positive solutions are not finite energy solutions.
Theorem \ref{th1.2} shows that these solutions are not integrable solutions
and do not decay with the fast rates.
The following result shows that these solutions decay `almost slowly'.

\begin{theorem} \label{th1.3}
Let $u,v$ be positive bounded solutions of (\ref{hls}). Then

(1) there does {\bf not} exist $C>0$ such that either
$$
\left \{
   \begin{array}{l}
      u(x) \geq C(1+|x|)^{-\theta_3}, \quad or\\
      v(x) \geq C(1+|x|)^{-\theta_4},
   \end{array}
   \right.
$$
where $\theta_1<\frac{\alpha(q+1)}{pq-1}$, $\theta_2<\frac{\alpha(p+1)}{pq-1}$.

(2) Moreover, if $(u,v) \not\in L^{r_0}(R^n) \times L^{s_0}(R^n)$
(i.e they are not integrable solutions, particularly in the supercritical case),
then $u,v$ decay with rates not larger than the slow rates. Namely,
there does {\bf not} exist $C>0$ such that either
$$
\left \{
   \begin{array}{l}
      u(x) \leq C(1+|x|)^{-\theta_3}, \quad or\\
      v(x) \leq C(1+|x|)^{-\theta_4},
   \end{array}
   \right.
$$
where $\theta_3>\frac{\alpha(q+1)}{pq-1}$, $\theta_4>\frac{\alpha(p+1)}{pq-1}$.
\end{theorem}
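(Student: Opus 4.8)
}
The plan is to bootstrap power-type estimates through the integral identities in (\ref{hls}): lower bounds on $u,v$ for part (1), and upper bounds for part (2). Two elementary facts will be used repeatedly, both as $|x|\to\infty$:
\[
\int_{\{|y|\le |x|/2\}}(1+|y|)^{-a}\,dy\simeq (1+|x|)^{n-a}\qquad(0<a<n),
\]
\[
\int_{R^n}\frac{(1+|y|)^{-a}}{|x-y|^{n-\alpha}}\,dy\simeq
\begin{cases}
(1+|x|)^{\alpha-a}, & \alpha<a<n,\\
(1+|x|)^{\alpha-n}\ln(2+|x|), & a=n,\\
(1+|x|)^{\alpha-n}, & a>n.
\end{cases}
\]
The first follows after discarding $\{|y|>|x|/2\}$ and using $|x-y|\le\tfrac32|x|$ there; the second is the standard split into $\{|y|\le|x|/2\}$, $\{|x-y|\le|x|/2\}$ and $\{|y|\ge 2|x|\}$, whose far field converges exactly because every exponent $a$ occurring below satisfies $a>\alpha$. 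Put $\beta_1=\frac{\alpha(q+1)}{pq-1}$, $\beta_2=\frac{\alpha(p+1)}{pq-1}$ for the two slow rates; one has the identities $n/r_0=\beta_1$, $n/s_0=\beta_2$, $p\beta_1-\alpha=\beta_2$, $q\beta_2-\alpha=\beta_1$. From the inequalities $\beta_1<n-\alpha$ and $\beta_2<\min\{n-\alpha,\,pn-(p+1)\alpha\}$ recalled after Theorem \ref{th1.1}, together with $p\le q$, a one-line computation yields $\alpha<p\beta_1\le q\beta_2<n$ and $(n-\alpha)r_0>n$, $(n-\alpha)s_0>n$; these keep all exponents produced below inside the admissible ranges of the two displays.

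For part (1), suppose toward a contradiction that $u(x)\ge C(1+|x|)^{-\theta}$ on $R^n$ with $\theta<\beta_1$ (the symmetric branch with a lower bound on $v$ is treated below). Inserting this into the second equation of (\ref{hls}) and keeping only $\{|y|\le|x|/2\}$ gives $v(x)\gtrsim(1+|x|)^{\alpha-p\theta}$. If $p\theta\le\alpha$ (in particular if $\theta\le 0$) this already makes $v$ unbounded, or makes $u$ unbounded after one further such step, contradicting the hypotheses; so assume $p\theta>\alpha$ and feed the bound on $v$ into the first equation to get $u(x)\gtrsim(1+|x|)^{\alpha(q+1)-pq\theta}$. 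Hence an admissible lower-bound exponent $\theta$ for $u$ is replaced by $T(\theta):=pq\,\theta-\alpha(q+1)$, a map with unique fixed point $\beta_1$ and constant slope $pq>1$; consequently $T^m(\theta)$ strictly decreases and tends to $-\infty$. After finitely many iterations some intermediate or full exponent becomes $\le 0$, whereupon the same estimates force $u$ or $v$ to be unbounded, a contradiction. The branch starting from $v(x)\ge C(1+|x|)^{-\theta}$ with $\theta<\beta_2$ is identical after interchanging the roles of $u,v$ and $p,q$, with $T$ replaced by $\theta\mapsto pq\,\theta-\alpha(p+1)$ (fixed point $\beta_2$). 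This proves (1).

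For part (2) I argue by contraposition: if $u(x)\le C(1+|x|)^{-\theta_3}$ for some $\theta_3>\beta_1$, or $v(x)\le C(1+|x|)^{-\theta_4}$ for some $\theta_4>\beta_2$, then $(u,v)\in L^{r_0}(R^n)\times L^{s_0}(R^n)$. In the first case, $\theta_3>\beta_1=n/r_0$ gives $u\in L^{r_0}$ directly. Substituting $u(x)\le C(1+|x|)^{-\theta_3}$ into the second equation of (\ref{hls}) and applying the second display (legitimate since $p\theta_3>p\beta_1>\alpha$) yields, according as $p\theta_3<n$, $=n$, or $>n$, the bound $v(x)\le C(1+|x|)^{-(p\theta_3-\alpha)}$, $C(1+|x|)^{\alpha-n}\ln(2+|x|)$, or $C(1+|x|)^{\alpha-n}$. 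In the first subcase $p\theta_3-\alpha>p\beta_1-\alpha=\beta_2=n/s_0$, so $v\in L^{s_0}$; in the remaining two subcases $v\in L^{s_0}$ because $(n-\alpha)s_0>n$. The second case is symmetric, interchanging $u\leftrightarrow v$, $p\leftrightarrow q$, $r_0\leftrightarrow s_0$ and using $q\beta_2-\alpha=\beta_1$. In either case $(u,v)\in L^{r_0}\times L^{s_0}$, which is the contrapositive of (2); together with (1) this is the ``almost slow'' behaviour announced in the introduction.

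The main difficulty I anticipate is bookkeeping rather than any isolated hard step. In part (1) one must verify that, as long as the current exponent is positive, it stays strictly below $n$, so that the two displays produce genuine power laws (not the saturated fast rate, which would stall the iteration), and one must pinpoint exactly when an intermediate half-step exponent turns nonpositive; in part (2) the borderline $a=n$ produces a logarithm that must be carried through but does not affect integrability. All of this is controlled by the elementary inequalities $\alpha<p\beta_1\le q\beta_2<n$ and $(n-\alpha)r_0,(n-\alpha)s_0>n$ established at the outset.
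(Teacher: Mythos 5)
Your argument is correct, and its skeleton coincides with the paper's: part (1) is the iteration of Proposition \ref{prop5.3} (the map $\theta\mapsto pq\,\theta-\alpha(q+1)$ with fixed point $\frac{\alpha(q+1)}{pq-1}$ and slope $pq>1$, driving the exponent to $-\infty$ and forcing unboundedness), and part (2) is the integrability computation of Proposition \ref{prop5.2} ($\theta_3>n/r_0$ gives $u\in L^{r_0}$ by a radial integral). The one substantive difference is in part (2): the paper stops after showing that the assumed decay of $u$ alone yields $u\in L^{r_0}$ (respectively, that of $v$ yields $v\in L^{s_0}$) and declares a contradiction, whereas the hypothesis being contradicted is the joint statement $(u,v)\notin L^{r_0}(R^n)\times L^{s_0}(R^n)$, which is not refuted by the integrability of a single component. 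Your extra step --- feeding $u(x)\le C(1+|x|)^{-\theta_3}$ back through the second equation of (\ref{hls}), tracking the three regimes $p\theta_3<n$, $=n$, $>n$, and checking $p\theta_3-\alpha>\beta_2=n/s_0$ and $(n-\alpha)s_0>n$ --- upgrades the conclusion to $(u,v)\in L^{r_0}\times L^{s_0}$ and so closes this gap; the symmetric branch starting from the $v$-bound is handled likewise. This propagation costs only the elementary Riesz-potential asymptotics you record at the outset (all legitimate since every exponent encountered exceeds $\alpha$), and it is worth keeping. Your additional bookkeeping in part (1) (verifying $\alpha<p\beta_1\le q\beta_2<n$ so that every intermediate exponent stays in the admissible range, and treating separately the step at which an exponent turns nonpositive) is sound and consistent with, though more explicit than, the paper's version.
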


\paragraph{Remark 1.3.}
\begin{enumerate}
\item The reason why we consider the bounded solutions is there exists
singular solutions $(u,v)=(C|x|^{-\frac{\alpha(q+1)}{pq-1}},
C|x|^{-\frac{\alpha(p+1)}{pq-1}})$ with some $C>0$.

\item According to Theorems \ref{th1.1}-\ref{th1.3}, we see that the solutions obtained
by the shooting method in \cite{Li2011} must decay with the slow rates.
\end{enumerate}
\vskip 3mm

So far, we only obtain the `almost slow' decay result as Theorem \ref{th1.3}.
It is still open whether the exactly slow decay result holds.
If $u,v$ are monotony like the condition in Theorem \ref{th1.1} (2),
then they decay slowly. In addition, assume the solutions are polynomially decaying
\begin{equation} \label{slow}
u(x) \simeq (1+|x|)^{-\theta_1}, \quad v(x) \simeq (1+|x|)^{-\theta_2},
\end{equation}
then the following theorem shows that
$u,v$ must decay slowly as long as
the supercritical condition (\ref{ucc}) holds.

\begin{theorem} \label{th1.4}
Let $u,v$ be bounded positive solutions of (\ref{hls}).
If there exist $\theta_1,\theta_2>0$
such that $u,v$ satisfy (\ref{slow}) as $|x|\to \infty$,
Then (\ref{ncc}) must hold, and
$$
\theta_1 \geq \frac{\alpha(q+1)}{pq-1}, \quad
\theta_2 \geq \frac{\alpha(p+1)}{pq-1}.
$$
Furthermore,

(1) if one strict inequality holds, then (\ref{cc}) must be true,
and $u,v$ are the finite energy solutions decaying fast like Theorem \ref{th1.2}.

(2) If the supercritical condition (\ref{ucc}) holds, then
the decay rates must be the slow ones:
$$
\theta_1=\frac{\alpha(q+1)}{pq-1}, \quad
\theta_2=\frac{\alpha(p+1)}{pq-1}.
$$
\end{theorem}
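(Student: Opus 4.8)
By Remark 1.2 we may assume $p\le q$. The plan is to substitute the postulated asymptotics back into the two integral equations, read off the finitely many admissible pairs $(\theta_1,\theta_2)$, and then use Proposition \ref{prop1.1} and Theorem \ref{th1.2} to separate the cases. Since $v^q(y)\simeq(1+|y|)^{-q\theta_2}$, mere convergence of $u(x)=\int_{R^n}|x-y|^{\alpha-n}v^q(y)\,dy$ already forces $q\theta_2>\alpha$. I would then invoke the standard asymptotics of the Riesz potential of $(1+|y|)^{-\beta}$ as $|x|\to\infty$: it behaves like $|x|^{\alpha-n}$ if $\beta>n$, like $|x|^{\alpha-n}\ln|x|$ if $\beta=n$, and like $|x|^{\alpha-\beta}$ if $\alpha<\beta<n$. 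With $\beta=q\theta_2$, comparison with $u\simeq|x|^{-\theta_1}$ kills the logarithmic alternative (no pure power is comparable to $|x|^{\alpha-n}\ln|x|$), so $q\theta_2\ne n$ and $\theta_1=\min\{n-\alpha,\,q\theta_2-\alpha\}$; symmetrically $p\theta_1>\alpha$, $p\theta_1\ne n$, and $\theta_2=\min\{n-\alpha,\,p\theta_1-\alpha\}$.

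Next I split into cases according to which branch of each minimum is active. \emph{Case I} ($q\theta_2<n$ and $p\theta_1<n$): then $\theta_1=q\theta_2-\alpha$ and $\theta_2=p\theta_1-\alpha$, a linear system with unique solution $\theta_1=\frac{\alpha(q+1)}{pq-1}$, $\theta_2=\frac{\alpha(p+1)}{pq-1}$ — the slow rates. \emph{Case II} ($q\theta_2>n$): then $\theta_1=n-\alpha$, hence $p\theta_1=p(n-\alpha)\ne n$, and $\theta_2=n-\alpha$ if $p(n-\alpha)>n$ while $\theta_2=p(n-\alpha)-\alpha=(\alpha-n)(p+1)+n$ if $p(n-\alpha)<n$; thus $u\simeq|x|^{\alpha-n}$ and $v$ decays exactly as in one of the three patterns of item (2) of Theorem \ref{th1.2}. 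The only remaining possibility, $q\theta_2<n<p\theta_1$, forces $\theta_2=n-\alpha$, $\theta_1=q(n-\alpha)-\alpha>0$, and then $p\le q$ together with $q(n-\alpha)<n$ gives $p\theta_1=p(q(n-\alpha)-\alpha)\le q^2(n-\alpha)-q\alpha<qn-q\alpha=q(n-\alpha)<n$, contradicting $p\theta_1>n$; so this case does not occur.

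Now the non-subcritical condition (\ref{ncc}). In Case II, $\theta_2>0$ forces $p(n-\alpha)>\alpha$, whence $(p+1)(n-\alpha)>n$ and the bounded function $u\simeq|x|^{\alpha-n}$ lies in $L^{p+1}(R^n)$; likewise $v\in L^{q+1}(R^n)$ (from $q\ge p$ if $\theta_2=n-\alpha$, and from $(q+1)\theta_2=q\theta_2+\theta_2>n$ if $\theta_2=p(n-\alpha)-\alpha$). In Case I, were (\ref{ncc}) to fail, i.e.\ $\frac{1}{p+1}+\frac{1}{q+1}>\frac{n-\alpha}{n}$ — equivalently $\alpha(p+1)(q+1)>n(pq-1)$, i.e.\ $(p+1)\theta_1=(q+1)\theta_2>n$ — then again $(u,v)\in L^{p+1}\times L^{q+1}$. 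In either case $(u,v)$ would be a finite energy solution, so Proposition \ref{prop1.1} would force the critical condition (\ref{cc}), contradicting the subcritical hypothesis. Hence (\ref{ncc}) holds; moreover Case II in fact yields (\ref{cc}).

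To finish: in Case I, $\theta_1$ and $\theta_2$ are exactly the slow rates, so the asserted lower bounds hold with equality. In Case II, (\ref{cc}) holds and, using $p(n-\alpha)>\alpha$ (resp.\ $q(n-\alpha)\ge p(n-\alpha)>n$ in the subcase $\theta_2=n-\alpha$), one checks under (\ref{cc}) that $\theta_1=n-\alpha>\frac{\alpha(q+1)}{pq-1}$ and $\theta_2>\frac{\alpha(p+1)}{pq-1}$ — for the remaining subcase $\theta_2=p(n-\alpha)-\alpha$, substituting this value into both $q\theta_2>n$ and $\theta_2>\frac{\alpha(p+1)}{pq-1}$ reduces each to the single inequality $pq(n-\alpha)>n+\alpha q$. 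Thus ``one strict inequality holds'' occurs precisely in Case II, where (\ref{cc}) is true and $u,v$ are the finite energy solutions decaying fast as in Theorem \ref{th1.2} — item (1); and if the supercritical condition (\ref{ucc}) holds then Case II is excluded, so we are in Case I and $\theta_1,\theta_2$ are the slow rates — item (2). The step I expect to be most delicate is the bookkeeping: disposing cleanly of the logarithmic borderlines $q\theta_2=n$ and $p\theta_1=n$, matching ``fast'' Case II precisely to the three decay regimes of Theorem \ref{th1.2} so that its integrability--finite-energy equivalence applies, and organizing the elementary algebraic identities linking $(p+1)\theta_1$, $q\theta_2$ and the (sub/super)critical exponent without sign errors.
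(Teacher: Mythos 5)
Your proposal is correct, and it reaches the conclusion by a genuinely different route from the paper. The paper's proof (Proposition \ref{prop5.5}) extracts only the one-sided inequalities $\theta_1\le q\theta_2-\alpha$ and $\theta_2\le p\theta_1-\alpha$ from the integral over $B(x,|x|/2)$, which already give the lower bounds on $\theta_1,\theta_2$; it rules out the subcritical case exactly as you do in Case~I (the slow rate would force $(q+1)\theta_2>n$, hence a finite energy solution, contradicting Proposition \ref{prop1.1}); and for item (1) it observes that $\theta_1>\frac{\alpha(q+1)}{pq-1}=\frac{n}{r_0}$ puts $u$ in $L^{r_0}$, pushes this to $v\in L^{s_0}$ via the HLS inequality, and then quotes Theorem \ref{th1.2} for the critical condition and the fast decay. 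You instead run the hypothesized powers through the full two-sided asymptotics of the Riesz potential of $(1+|y|)^{-\beta}$, obtain the exact relations $\theta_1=\min\{n-\alpha,q\theta_2-\alpha\}$ and $\theta_2=\min\{n-\alpha,p\theta_1-\alpha\}$, and classify: your Case~I is the slow pair, your Case~II reproduces the fast pair of Theorem \ref{th1.2} directly, and the mixed case is excluded by a correct elementary computation using $p\le q$. This buys a stronger structural statement (the only admissible decay pairs are the slow pair or the fast pair) and avoids routing through the integrability machinery of Sections 2--4, at the cost of needing the Riesz-potential asymptotics lemma in full generality (the paper proves it only for the specific exponents arising in Propositions \ref{prop4.2}--\ref{prop4.5}) and the small preliminary observation that boundedness plus positivity upgrades the asymptotic relation (\ref{slow}) to a global two-sided bound, which is what the lemma actually requires. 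Your bookkeeping at the borderlines $q\theta_2=n$, $p\theta_1=n$ (excluded because a pure power cannot match a logarithmic correction) and the reduction of both strict inequalities in Case~II to $pq(n-\alpha)>n+\alpha q$ check out.
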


\paragraph{Remark 1.4.}
\begin{enumerate}
\item The fast decay rates of the finite energy solutions
were obtained in \cite{LLM-CV} and \cite{SL}, which
is coincident with Theorem \ref{th1.2}.

\item By virtue of $pq>1$, (\ref{2ks}) is equivalent to (\ref{hls})
with $\alpha=2k$ (cf. \cite{CL3}). Therefore,
Theorems \ref{th1.1}-\ref{th1.4} with $\alpha=2k$ are still true for
(\ref{2ks}).

\item If $p=q$ and $u\equiv v$, the system (\ref{hls}) is
reduced to the single equation. Therefore, Theorems \ref{th1.1}-\ref{th1.4}
with $p=q$ and $u \equiv v$ are still true. In particular, when $\alpha=2k$,
Theorems \ref{th1.1}-\ref{th1.4} still hold for the single $2k$-order PDE,
which is coincident with (R1).
\end{enumerate}

\section{Integrable solution and finite energy solution}

\begin{theorem} \label{th2.1}
Let $(u,v) \in L^{r_0}(R^n) \times L^{s_0}(R^n)$ be a pair of
positive solutions of (\ref{hls}). If $p \leq q$, then $(u,v)
\in L^{r}(R^n) \times L^{s}(R^n)$ for all
$$
\frac{1}{r} \in (0,\frac{n-\alpha}{n}), \quad
\frac{1}{s} \in (0,\min\{\frac{n-\alpha}{n},\frac{pn-(p+1)\alpha}{n}\}).
$$
\end{theorem}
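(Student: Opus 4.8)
The plan is to combine the Hardy--Littlewood--Sobolev (HLS) inequality with the ``regularity lifting by contracting operators'' technique used in \cite{CLO-CPDE}, \cite{JL}, \cite{CL-DCDS}, supplemented by explicit estimates on the Riesz kernel $|x-y|^{\alpha-n}$. Recall HLS: for $g\in L^t(\mathbb R^n)$ with $1<t<n/\alpha$, one has $I_\alpha g(x):=\int_{\mathbb R^n}|x-y|^{\alpha-n}g(y)\,dy\in L^m(\mathbb R^n)$ with $\tfrac1m=\tfrac1t-\tfrac\alpha n$ and $\|I_\alpha g\|_m\le C\|g\|_t$, while if $g$ lies in some $L^t$ with $t>n/\alpha$ together with lower integrability then $I_\alpha g\in L^\infty$. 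Throughout, elementary manipulations of the standing exponents (cf.\ the discussion after Theorem~\ref{th1.1} and \cite{LeiLi}) give $1<s_0/q<n/\alpha$, which is what makes the scheme start.

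\emph{Step 1: boundedness and integrability up to $\tfrac1{r_0},\tfrac1{s_0}$.} Since $p\le q$ forces $q>1$, I would split $v^q=v^q\mathbf 1_{\{v\le A\}}+v^q\mathbf 1_{\{v>A\}}$; as $v\in L^{s_0}$, the tail $v^q\mathbf 1_{\{v>A\}}$ is small in $L^{s_0/q}$ for $A$ large by dominated convergence, and similarly for $u^p$ (when $p\ge1$ one truncates the large values of $u$, when $p<1$ the small ones, since there $u^{p-1}$ blows up only where $u$ is small). Substituting $v=I_\alpha(u^p)$ into $u=I_\alpha(v^q)$ and freezing the outer powers $v^{q-1},u^{p-1}$ at the solution, write $u=T_Au+g_A$, where $T_A$ collects the terms carrying a large-value truncation and, by HLS plus H\"older, is bounded on every $L^\rho$, $\rho\in(\tfrac n{n-\alpha},\infty)$, with operator norm tending to $0$ as $A\to\infty$, while $g_A$ is controlled in $L^{r_0}$. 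The lifting lemma (Neumann series for $I-T_A$) then yields $u\in L^r$ for all $r\ge r_0$; iterating pushes the exponent past the $n/\alpha$ threshold and gives $u,v\in L^\infty$. Symmetrically $v\in L^s$ for all $s\ge s_0$. This settles $\tfrac1r\in(0,\tfrac1{r_0}]$, $\tfrac1s\in(0,\tfrac1{s_0}]$.

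\emph{Step 2: the remaining range via the explicit kernel.} Since $L^{r_0}\cap L^\infty$ does not imply $L^r$ for $r<r_0$, one must now exploit the kernel directly. For $|x|$ large, decompose $\mathbb R^n=\{|y|\le|x|/2\}\cup\{|x-y|\le|x|/2\}\cup(\text{rest})$: on the first and third regions $|x-y|^{\alpha-n}\le C|x|^{\alpha-n}$, so they contribute $\le C|x|^{\alpha-n}\int_{|y|\le|x|/2}v^q+C|x|^{\alpha-n}\int_{|y|\ge|x|/4}v^q$; the middle region is handled by H\"older against the local norm of $|x-y|^{\alpha-n}\mathbf 1_{|x-y|\le|x|/2}$, using $v^q\in L^t$ for some $t>n/\alpha$ (legitimate since $s_0/q<n/\alpha$ and $v$ is bounded). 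Running this estimate back and forth between the two equations produces a self-improving recursion for the decay rates of $u$ and $v$; the standing relations among $p,q,n,\alpha$ guarantee that the relevant integrals converge, so the recursion closes and saturates at $u(x)=O(|x|^{\alpha-n})$. Then $u^p(x)=O(|x|^{-p(n-\alpha)})$, and in $v=I_\alpha(u^p)$ the same decomposition gives $v(x)=O(|x|^{\alpha-n})$ when $p(n-\alpha)>n$ (where $u^p\in L^1$ and the head term dominates), $v(x)=O(|x|^{\alpha-n}\log|x|)$ when $p(n-\alpha)=n$, and $v(x)=O(|x|^{(\alpha-n)(p+1)+n})$ when $p(n-\alpha)<n$ (where the middle region dominates). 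Integrating these pointwise bounds together with boundedness near the origin gives $u\in L^r$ for every $\tfrac1r\in(0,\tfrac{n-\alpha}n)$ and $v\in L^s$ for every $\tfrac1s\in(0,\min\{\tfrac{n-\alpha}n,\tfrac{pn-(p+1)\alpha}n\})$ directly; no further interpolation is needed.

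\emph{Main obstacle.} Step~2 is the crux: one must arrange the kernel splitting so that the decay-rate recursion genuinely closes and converges to the fast rate rather than stalling at the $L^{r_0}$-threshold rate, track correctly which branch $p(n-\alpha)\gtrless n$ governs $v$, and deal with the case $p<1$, where $u^{p-1}$ is large at infinity so the truncations of Step~1 must be arranged accordingly.
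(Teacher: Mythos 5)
There is a genuine gap, and it sits exactly where you flag the ``main obstacle'': your scheme cannot get below the exponent $r_0$. Your Step 1 extracts from the contraction/lifting argument only the conclusions $u\in L^r$ for $r\ge r_0$ and $v\in L^s$ for $s\ge s_0$ (plus boundedness), i.e.\ only the \emph{upward} range, and you then try to recover the \emph{downward} range ($\frac{n}{n-\alpha}<r<r_0$, which is the nontrivial part of the claim since $\frac{1}{r_0}<\frac{n-\alpha}{n}$) from pointwise fast decay. But the decay recursion of Step 2 stalls. The best pointwise information available after Step 1 (even granting the radial monotonicity of integrable solutions from \cite{CL-DCDS}) is $u(x)\leq C|x|^{-n/r_0}$ and $v(x)\leq C|x|^{-n/s_0}$, and $n/r_0=\frac{\alpha(q+1)}{pq-1}$, $n/s_0=\frac{\alpha(p+1)}{pq-1}$ are precisely the \emph{slow} rates, which form a fixed point of the map $(\theta_1,\theta_2)\mapsto(q\theta_2-\alpha,\,p\theta_1-\alpha)$ driving your recursion. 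Concretely, under (\ref{ncc}) one always has $q<s_0$, so $\int_{|y|\le|x|/2}v^q\,dy$ is \emph{not} bounded --- it can grow like $|x|^{\,n-qn/s_0}$ --- and your head term then contributes $|x|^{\alpha-n}\cdot|x|^{\,n-qn/s_0}=|x|^{-n/r_0}$, while the middle term contributes $v(x/2)^q|x|^{\alpha}\leq C|x|^{-n/r_0}$ as well: each pass through the two equations reproduces the slow rate and never reaches $|x|^{\alpha-n}$. To break the stall you need $\int_{R^n}v^q\,dy<\infty$, i.e.\ exactly the downward integrability you are trying to prove (this is Proposition \ref{prop4.1}, which the paper deduces \emph{from} Theorem \ref{th2.1}); as written, your plan is circular.

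The repair is to use the lifting lemma in both directions, which is what the paper does. The operators $T_1g=\int_{R^n} v_A^{q-1}(y)g(y)|x-y|^{\alpha-n}dy$ and $T_2f=\int_{R^n} u_A^{p-1}(y)f(y)|x-y|^{\alpha-n}dy$, with $u_A,v_A$ the truncations retaining large values \emph{and} the region $|x|>A$, are contractions on $L^r(R^n)\times L^s(R^n)$ for all $(\frac1r,\frac1s)$ in a rectangle $I_1$ that contains $(\frac1{r_0},\frac1{s_0})$ in its interior and whose first component extends up to $\frac{n-\alpha}{n}$, subject to the coupling $\frac1r-\frac1s=\frac1{r_0}-\frac1{s_0}$ needed for the H\"older/HLS exponent bookkeeping; regularity lifting then yields $(u,v)\in L^r\times L^s$ throughout $I_1$, in particular for exponents $r<r_0$. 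A further application of the HLS inequality directly to $u=I_\alpha(v^q)$ and $v=I_\alpha(u^p)$, combined with two elementary exponent inequalities derived from (\ref{ncc}) and $p\le q$, decouples the two ranges and extends them to the full intervals stated. No pointwise decay enters at this stage; the fast decay of Section 4 is a consequence of Theorem \ref{th2.1}, not an ingredient of its proof.
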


\begin{proof}
Let $\frac{1}{r} \in
(\frac{\alpha(q-p)}{n(pq-1)},\frac{n-\alpha}{n})$ and $\frac{1}{s}
\in (0,1-\frac{\alpha(q-1)(p+1)}{n(pq-1)})$ satisfy
\begin{equation} \label{line}
\frac{1}{r}-\frac{1}{s}=\frac{1}{r_0}-\frac{1}{s_0}.
\end{equation}
By (\ref{line}) and the values of $r_0$ and $s_0$, we have
\begin{equation} \label{relation}
\frac{1}{r}+\frac{\alpha}{n}=\frac{q-1}{s_0}+\frac{1}{s}, \quad
\frac{1}{r}+\frac{\alpha}{n}=\frac{q-1}{s_0}+\frac{1}{s}.
\end{equation}

For $A>0$, set $u_A=u$ when $u>A$ or $|x|>A$; $u_A=0$ when $u \leq A$ and $|x| \leq A$.
Similarly, $v_A$ is the same definition.

For $g \in L^r(R^n)$ and $f \in L^s(R^n)$, define
$$
(T_1g)(x)=\int_{R^n}\frac{v_A^{q-1}(y)g(y)dy}{|x-y|^{n-\alpha}}, \quad
(T_2f)(x)=\int_{R^n}\frac{u_A^{p-1}(y)f(y)dy}{|x-y|^{n-\alpha}}.
$$
Noting (\ref{relation}), we can use the HLS inequality and the H\"older inequality
to obtain
$$\begin{array}{ll}
&\|T_1g\|_r \leq C\|v_A^{q-1}g\|_{\frac{nr}{n+r\alpha}} \leq C\|v_A\|_{s_0}^{q-1}\|g\|_s,\\
&\|T_2f\|_s \leq C\|u_A^{p-1}f\|_{\frac{ns}{n+s\alpha}} \leq C\|u_A\|_{r_0}^{p-1}\|f\|_r.
\end{array}
$$
Choosing $A$ sufficiently large such that
$$
C\|v_A\|_{s_0}^{q-1} \leq \frac{1}{4}, \quad
C\|u_A\|_{r_0}^{p-1} \leq \frac{1}{4},
$$
we see that $T=(T_1,T_2)$ is a contraction map from $L^r(R^n) \times L^s(R^n)$
to itself for all $(\frac{1}{r},\frac{1}{s}) \in I_1$, where
$$
I_1:=(\frac{\alpha(q-p)}{n(pq-1)},\frac{n-\alpha}{n})
\times (0,1-\frac{\alpha(q-1)(p+1)}{n(pq-1)}),
$$
and the norm
$$
\|T(g,f)\|_{L^r(R^n) \times L^s(R^n)}=\|T_1g\|_r+\|T_2f\|_s.
$$

In view of $(\frac{1}{r_0},\frac{1}{s_0}) \in I_1$, $T$ is also a contraction map from
$L^{r_0}(R^n) \times L^{s_0}(R^n)$ to itself.

Define
$$
G=\int_{R^n}\frac{(v-v_A)^q(y)dy}{|x-y|^{n-\alpha}}, \quad
F=\int_{R^n}\frac{(u-u_A)^p(y)dy}{|x-y|^{n-\alpha}}.
$$
Then the HLS inequality leads to $(G,F) \in L^r(R^n) \times L^s(R^n)$.

Since $(u,v)$ solves
$$
(g,f)=T(g,f)+(G,F),
$$
we can use the lifting lemma (Lemma 2.1 in \cite{JL}) to obtain
\begin{equation} \label{initial}
(u,v) \in
L^r(R^n) \times L^s(R^n), \quad \forall ~(\frac{1}{r},\frac{1}{s}) \in I_1.
\end{equation}

Next we extend the integrability domain from $I_1$ to
$$
(0,\frac{n-\alpha}{n})
\times (0,\min\{\frac{n-\alpha}{n},\frac{pn-(p+1)\alpha}{n}\}).
$$

First we claim
\begin{equation} \label{claim1}
q>\frac{q\alpha}{n}\frac{(p+1)(q-1)}{pq-1}
+\frac{\alpha}{n}.
\end{equation}
In fact, by the non-subcritical condition (\ref{ncc}) and $p \leq q$, we have
$\frac{1}{q+1}<\frac{2n-\alpha}{2n}$. This leads to
$$
1-\frac{\alpha}{qn}>\frac{q-1}{q+1}.
$$
Using again the non-subcritical condition (\ref{ncc}), we get
$$
1-(\frac{1}{p+1}+\frac{1}{q+1})>\frac{\alpha}{n}
[\frac{q-1}{q+1}+\frac{1}{q}(1-(\frac{1}{p+1}+\frac{1}{q+1}))].
$$
Multiplying by $(p+1)(q+1)$ yields
$$
pq-1>\frac{\alpha}{n}[(p+1)(q-1)+\frac{pq-1}{q}].
$$
Multiplying by $\frac{q}{pq-1}$ again, we see (\ref{claim1}).

Second we claim
\begin{equation} \label{claim2}
q-\frac{q\alpha}{n}\frac{(p+1)(q-1)}{pq-1}
-\frac{\alpha}{n} > \frac{\alpha(q-p)}{n(pq-1)}.
\end{equation}

In fact, by the non-subcritical condition (\ref{ncc}), we have
$$
\frac{q}{q-1}[1-(\frac{1}{p+1}+\frac{1}{q+1})]>\frac{\alpha}{n}.
$$
This leads to
$$\begin{array}{ll}
q>&\displaystyle\frac{\alpha}{n}\frac{q-1}{1-(\frac{1}{p+1}+\frac{1}{q+1})}=
\frac{\alpha}{n}\frac{(p+1)(q+1)(q-1)}{pq-1}\\[3mm]
&=\displaystyle\frac{\alpha}{n}
[\frac{q(p+1)(q-1)}{pq-1}+1+\frac{q-p}{pq-1}].
\end{array}
$$
This is (\ref{claim2}).

Using the HLS inequality, we have
$$
\|u\|_r \leq C\|v^q\|_{\frac{nr}{n+r\alpha}} \leq C\|v\|_{\frac{nrq}{n+r\alpha}}^q.
$$
Noting $v \in L^s(R^n)$ for all $\frac{1}{s} \in (0,1-\frac{\alpha}{n}\frac{(p+1)(q-1)}{pq-1})$
implied by (\ref{initial}),
we get $u \in L^r(R^n)$ for all
$$
\frac{1}{r} \in (0,q-\frac{q\alpha}{n}\frac{(p+1)(q-1)}{pq-1}
-\frac{\alpha}{n}).
$$
Eq. (\ref{claim1}) means this interval makes sense. Combining this with (\ref{initial}), from
(\ref{claim2}) we deduce
\begin{equation} \label{last}
u \in L^r(R^n), \quad \forall \frac{1}{r} \in (0,\frac{n-\alpha}{n}).
\end{equation}

Similarly, using the HLS inequality, we get
$$
\|v\|_s \leq C\|u\|_{\frac{nsp}{n+s\alpha}}^p.
$$
By means of (\ref{last}) we get
$$
v \in L^s(R^n), \quad \forall \frac{1}{s} \in (0,\min\{\frac{n-\alpha}{n},\frac{pn-(p+1)\alpha}{n}\}).
$$
\end{proof}

\begin{theorem} \label{th2.2}
If $u,v$ are integrable solutions of (\ref{hls}) with (\ref{ncc}),
then $u,v$ are the finite energy solutions.
\end{theorem}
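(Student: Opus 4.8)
The plan is to obtain Theorem~\ref{th2.2} as an immediate consequence of the integrability extension in Theorem~\ref{th2.1}, so that the only remaining work is an elementary check that the Lebesgue exponents $p+1$ and $q+1$ lie inside the integrability range established there. By Remark~1.2 (or simply by exchanging the roles of $u$ and $v$) we may assume without loss of generality that $p\le q$. Since $(u,v)$ is an integrable solution, $(u,v)\in L^{r_0}(R^n)\times L^{s_0}(R^n)$, and Theorem~\ref{th2.1} gives
$$
(u,v)\in L^r(R^n)\times L^s(R^n)\quad\text{for all }\ \frac1r\in\Big(0,\frac{n-\alpha}{n}\Big),\ \ \frac1s\in\Big(0,\min\Big\{\frac{n-\alpha}{n},\frac{pn-(p+1)\alpha}{n}\Big\}\Big).
$$
Hence it suffices to verify that $\frac1{p+1}$ belongs to the first open interval and $\frac1{q+1}$ to the second; this yields $u\in L^{p+1}(R^n)$ and $v\in L^{q+1}(R^n)$, which is exactly the assertion that $(u,v)$ is a finite energy solution.

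For $\frac1{p+1}$, positivity is clear, and the non-subcritical condition (\ref{ncc}) together with $\frac1{q+1}>0$ gives $\frac1{p+1}\le\frac{n-\alpha}{n}-\frac1{q+1}<\frac{n-\alpha}{n}$; the same computation shows $\frac1{q+1}<\frac{n-\alpha}{n}$. It then remains only to verify the strict inequality $\frac1{q+1}<\frac{pn-(p+1)\alpha}{n}$. Here I would first rewrite (\ref{ncc}) by clearing denominators into the equivalent form $\alpha\le\frac{n(pq-1)}{(p+1)(q+1)}$, and then use $p>0$ to note $pq-1<pq+p-1$, whence
$$
\alpha\ \le\ \frac{n(pq-1)}{(p+1)(q+1)}\ <\ \frac{n(pq+p-1)}{(p+1)(q+1)}.
$$
Clearing denominators in the last inequality turns it into $1<p(q+1)-\frac{(p+1)(q+1)\alpha}{n}$, i.e. $\frac1{q+1}<p-\frac{(p+1)\alpha}{n}=\frac{pn-(p+1)\alpha}{n}$, as required. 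This finishes the two membership checks, and the conclusion follows from Theorem~\ref{th2.1}.

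I do not expect a real obstacle: once Theorem~\ref{th2.1} is available the statement is purely arithmetic. The only point requiring a little care is extracting the \emph{strict} bound $\frac1{q+1}<\frac{pn-(p+1)\alpha}{n}$ from the possibly non-strict hypothesis (\ref{ncc}), which works precisely because the gap between $pq+p-1$ and $pq-1$ is the strictly positive quantity $p$. (In the critical case (\ref{cc}) one has $r_0=p+1$ and $s_0=q+1$, so the conclusion is trivial; the substance of the argument is therefore the supercritical regime, which the above handles uniformly.)
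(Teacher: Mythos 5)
Your proposal is correct and follows essentially the same route as the paper: invoke Theorem~\ref{th2.1} and then verify by elementary arithmetic from (\ref{ncc}) that $\frac{1}{p+1}$ and $\frac{1}{q+1}$ lie in the stated open intervals, with the strict bound $\frac{1}{q+1}<\frac{pn-(p+1)\alpha}{n}$ extracted from the non-strict hypothesis via the positive gap $p$ (the paper adds $\frac{p}{(p+1)(q+1)}$ to the right side of (\ref{ncc}), which is the same device as your $pq-1<pq+p-1$). The only cosmetic difference is that you make the reduction to $p\le q$ explicit, which the paper leaves implicit.
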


\begin{proof}
First, (\ref{ncc}) implies
\begin{equation} \label{6.1}
\frac{1}{p+1},\frac{1}{q+1} \in (0,\frac{n-\alpha}{n}),
\end{equation}
and
$$
\frac{1}{p+1}+\frac{1}{q+1}<\frac{p}{(p+1)(q+1)}+\frac{n-\alpha}{n}.
$$
Thus,
$$
\frac{\alpha}{n}<\frac{p}{(p+1)(q+1)}+1-(\frac{1}{p+1}+\frac{1}{q+1})
=\frac{p+pq-1}{(p+1)(q+1)}.
$$
This result leads to
$$
\frac{1}{q+1}<p-(p+1)\frac{\alpha}{n}=\frac{pn-(p+1)\alpha}{n}.
$$
Combining this with (\ref{6.1}), and using Theorem \ref{th2.1}, we obtain
$$
(u,v) \in L^{p+1}(R^n) \times L^{q+1}(R^n).
$$
Namely, $u,v$ are the finite energy solutions.
\end{proof}

On the contrary, if (\ref{cc}) is true, we obtain $p+1=r_0$ and $q+1=s_0$.
So we also have the following result which shows that the finite energy solutions
are also the integrable solutions.

\begin{theorem} \label{th2.3}
If $(u,v) \in L^{p+1}(R^n) \times L^{q+1}(R^n)$ solves (\ref{hls}),
then $(u,v) \in L^{r_0}(R^n) \times L^{s_0}(R^n)$.
\end{theorem}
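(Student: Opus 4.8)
The plan is to exploit the classification of finite energy solutions furnished by Proposition \ref{prop1.1}. By definition, the hypothesis $(u,v) \in L^{p+1}(R^n) \times L^{q+1}(R^n)$ means that $(u,v)$ is a finite energy solution of (\ref{hls}); in particular (\ref{hls}) admits a finite energy solution, so the ``only if'' part of Proposition \ref{prop1.1} forces the critical condition (\ref{cc}) to hold. Thus the first step is simply to record that (\ref{cc}) is in force.

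Next I would carry out the elementary algebraic identity already alluded to in the paragraph preceding the statement. Expanding $\frac{1}{p+1} + \frac{1}{q+1} = \frac{p+q+2}{(p+1)(q+1)}$ and subtracting from $1$ gives
\[
1 - \left(\frac{1}{p+1} + \frac{1}{q+1}\right) = \frac{pq-1}{(p+1)(q+1)}.
\]
Under (\ref{cc}) the left-hand side equals $\frac{\alpha}{n}$, hence $n(pq-1) = \alpha(p+1)(q+1)$. Dividing this identity by $\alpha(q+1)$ yields $r_0 = \frac{n(pq-1)}{\alpha(q+1)} = p+1$, and dividing by $\alpha(p+1)$ yields $s_0 = \frac{n(pq-1)}{\alpha(p+1)} = q+1$. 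With these two identities the conclusion is immediate, since then $L^{r_0}(R^n) \times L^{s_0}(R^n) = L^{p+1}(R^n) \times L^{q+1}(R^n)$ and the hypothesis places $(u,v)$ precisely in the latter space.

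There is essentially no analytic obstacle here; the only point requiring care is that the passage from ``$(u,v)$ is a finite energy solution'' to ``(\ref{cc}) holds'' is not an algebraic triviality but genuinely uses the nonexistence half of Proposition \ref{prop1.1} (equivalently, the nonexistence of finite energy solutions outside the critical regime). One could in principle bypass Proposition \ref{prop1.1} and instead run the regularity-lifting machinery of Theorem \ref{th2.1} directly from the finite-energy integrability exponents, but since under (\ref{cc}) the two solution classes literally coincide, invoking Proposition \ref{prop1.1} is by far the shortest route and is the one I would take.
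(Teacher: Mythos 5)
Your proof is correct, but it takes a genuinely different and considerably more elementary route than the paper's. You observe that once the ``only if'' half of Proposition \ref{prop1.1} forces (\ref{cc}), the identity $n(pq-1)=\alpha(p+1)(q+1)$ gives $r_0=p+1$ and $s_0=q+1$, so the conclusion is literally the hypothesis; this is exactly the identity the authors themselves record in the sentence immediately preceding Theorem \ref{th2.3}, so your reduction is consistent with the paper. The paper's own proof begins the same way (Proposition \ref{prop1.1} yields (\ref{cc})) but then invokes the integrability theorem of Jin and Li (Theorem 1.1 in \cite{JL}) to place $(u,v)$ in $L^r(R^n)\times L^s(R^n)$ for the whole open range $\frac{1}{r}\in(0,\frac{n-\alpha}{n})$, $\frac{1}{s}\in(0,\min\{\frac{n-\alpha}{n},\frac{pn-(p+1)\alpha}{n}\})$, and finishes by checking algebraically that $\frac{1}{r_0}$ and $\frac{1}{s_0}$ lie in these intervals (a verification that also uses $p\le q$ and produces the inequality $\frac{\alpha(q+1)}{pq-1}<n-\alpha$ along the way). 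The longer route buys these by-products --- the full integrability interval and the slow-versus-fast rate comparison --- whereas your argument proves the literal statement with no analytic input beyond Proposition \ref{prop1.1} and no appeal to the regularity-lifting machinery. Both approaches rest on the same nontrivial external ingredient, namely the nonexistence of finite energy solutions outside the critical regime, which you correctly flag as the one genuinely non-algebraic step.
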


\begin{proof}
First, Proposition \ref{prop1.1} implies that (\ref{cc}) holds when
$u,v$ are the finite energy solutions. According to Theorem 1.1 in \cite{JL}, the
finite energy solutions $(u,v)$ of (\ref{hls}) with (\ref{cc}) have the following
integrability: $(u,v) \in L^r(R^n) \times L^s(R^n)$ for all $r,s$ satisfying
$$
\frac{1}{r} \in (0,\frac{n-\alpha}{n}), \quad
\frac{1}{s} \in (0,\min\{\frac{n-\alpha}{n},\frac{pn-(p+1)\alpha}{n}\}).
$$
So we only need to prove that $\frac{1}{r_0}$ and $\frac{1}{s_0}$ belong to
the corresponding intervals.

First (\ref{cc}) implies
$$
\frac{n}{p+1}<n-\alpha.
$$
Eq. (\ref{cc}) also leads to
$$
\frac{n}{p+1}=\frac{\alpha}{p+1} (1-\frac{1}{p+1}-\frac{1}{q+1})^{-1}.
$$
Combining these results yields
\begin{equation} \label{slowfast}
\frac{\alpha(q+1)}{pq-1}<n-\alpha,
\end{equation}
which means $\frac{1}{r_0} \in (0,\frac{n-\alpha}{n})$.

Noting $p \leq q$, we also have $\frac{1}{s_0} \in (0,\frac{n-\alpha}{n})$
by the same argument above. In addition, (\ref{slowfast}) leads to
$\frac{q\alpha(p+1)}{pq-1}<n$. Thus, $\frac{\alpha(p+1)}{pq-1}+(p+1)\alpha
<pn$, which implies $\frac{1}{s_0} <
\frac{pn-(p+1)\alpha}{n}$. Therefore, $\frac{1}{s_0}$ belongs
to the integrability interval.
\end{proof}

Theorems \ref{th2.2} and \ref{th2.3} show that (1) and (3) in
Theorem \ref{th1.2} are equivalent.

\section{Integrable solutions are bounded}

\begin{theorem} \label{th3.1}
If $(u,v) \in L^{r_0}(R^n) \times L^{s_0}(R^n)$ is a pair of
positive solutions of (\ref{hls}), then $u,v$ are bounded and
converge to zero when $|x| \to \infty$.
\end{theorem}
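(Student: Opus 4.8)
The plan is to read off boundedness and decay directly from the integral equations~(\ref{hls}), feeding in the high integrability established in Theorem~\ref{th2.1}. The engine is an elementary estimate: if $g\ge 0$ lies in $L^{m_1}(R^n)\cap L^{m_2}(R^n)$ for some exponents with $1\le m_1<\frac n\alpha<m_2$, then $h(x):=\int_{R^n}\frac{g(y)}{|x-y|^{n-\alpha}}\,dy$ is bounded on $R^n$ and $h(x)\to 0$ as $|x|\to\infty$. I would prove this by splitting $h(x)$ as $\int_{|x-y|<1}+\int_{|x-y|\ge1,\,|y|\le|x|/2}+\int_{|x-y|\ge1,\,|y|>|x|/2}$ (integrand $\frac{g(y)}{|x-y|^{n-\alpha}}\,dy$ throughout) and applying H\"older's inequality region by region. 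On the first region, $|z|^{\alpha-n}\in L^{m_2'}(B_1)$ is exactly the requirement $m_2>\frac n\alpha$, and the remaining factor $\|g\|_{L^{m_2}(B_1(x))}$ is bounded and tends to $0$ as $|x|\to\infty$. On the second region, $|x-y|\ge|x|/2$, so the kernel contributes $\| |z|^{\alpha-n}\|_{L^{m_1'}(\{|z|\ge|x|/2\})}$, which is finite because $m_1<\frac n\alpha$ and which is $O(|x|^{\,n/m_1'-(n-\alpha)})\to 0$, while $\|g\|_{L^{m_1}(R^n)}$ is a fixed constant. On the third region, the kernel contributes the fixed constant $\| |z|^{\alpha-n}\|_{L^{m_1'}(\{|z|\ge1\})}$ and $\|g\|_{L^{m_1}(\{|y|>|x|/2\})}\to 0$. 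Retaining the bounded pieces gives boundedness; retaining the vanishing factors gives the decay.

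It then remains to exhibit, for $g=v^q$ (so that $h=u$) and for $g=u^p$ (so that $h=v$), exponents $1\le m_1<\frac n\alpha<m_2$ with the powers in the corresponding Lebesgue classes. We may assume $p\le q$ (otherwise exchange $u$ with $v$ and $p$ with $q$; cf.\ Remark~1.2), and since integrable solutions cannot exist in the subcritical case (cf.\ \cite{CL-DCDS}) the non-subcritical condition~(\ref{ncc}) holds, so Theorem~\ref{th2.1} gives $u\in L^\rho$ for every $\frac1\rho\in(0,\frac{n-\alpha}{n})$ and $v\in L^\tau$ for every $\frac1\tau\in(0,J)$, where $J:=\min\{\frac{n-\alpha}{n},\frac{pn-(p+1)\alpha}{n}\}$. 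Moreover the inequalities $\frac1{r_0}<\frac{n-\alpha}{n}$ and $\frac1{s_0}<J$ hold — these are precisely $\frac{\alpha(q+1)}{pq-1}<n-\alpha$ and $\frac{\alpha(p+1)}{pq-1}<\min\{n-\alpha,pn-(p+1)\alpha\}$, recalled after Remark~1.1 — and since $pq>1$ the trivial bounds $\frac{\alpha}{pn}<\frac1{r_0}$ and $\frac{\alpha}{qn}<\frac1{s_0}$ hold (clear the denominators and use $p,q>-1$). Hence $\frac{\alpha}{qn}$ lies strictly inside $(0,J)$: picking $\frac1{\tau_2}\in(0,\frac{\alpha}{qn})$ and $\frac1{\tau_1}\in(\frac{\alpha}{qn},\min\{J,\frac1q\})$ (a nonempty interval, since $\frac{\alpha}{qn}<J$ and $\frac{\alpha}{qn}<\frac1q$), the exponents $m_i:=\tau_i/q$ obey $1\le m_1<\frac n\alpha<m_2$, and $v^q\in L^{m_1}(R^n)\cap L^{m_2}(R^n)$ because $v\in L^{\tau_1}\cap L^{\tau_2}$. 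The elementary estimate then shows $u$ is bounded with $u(x)\to0$ as $|x|\to\infty$. The argument for $v$ is verbatim with $u^p, p, r_0, \frac{n-\alpha}{n}$ in place of $v^q, q, s_0, J$.

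The only genuine content is the exponent bookkeeping in the second paragraph: one must check that, after raising $u$ and $v$ to the powers $p$ and $q$, the integrability window produced by Theorem~\ref{th2.1} still straddles the threshold $\frac n\alpha$. The side above the threshold ($m_2>\frac n\alpha$), which absorbs the local singularity $|x-y|^{\alpha-n}$ of the kernel, is exactly the improvement over $L^{r_0}\times L^{s_0}$ that Theorem~\ref{th2.1} provides; the side below it ($m_1<\frac n\alpha$), responsible for integrability at infinity and hence for the decay, is essentially free, being built into $v\in L^{s_0}$, $u\in L^{r_0}$ together with $\frac{\alpha}{qn}<\frac1{s_0}$ and $\frac{\alpha}{pn}<\frac1{r_0}$. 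With both exponents in hand the decay is the routine three-region splitting above, and I do not anticipate any further obstacle.
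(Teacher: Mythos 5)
Your proof is correct, but it takes a genuinely different route from the paper's. Both arguments lean on Theorem \ref{th2.1}, and both handle the local singularity of the kernel the same way: you need $v^q\in L^{m_2}$ for some $m_2>\frac{n}{\alpha}$, which is exactly the paper's choice of $l$ with $\frac{1}{ql}=\varepsilon$ small in the estimate of $H_1$. The divergence is in the far part. The paper avoids asking for any integrability of $v^q$ below the threshold $\frac{n}{\alpha}$: it bounds the tail $H_2=\int_1^\infty t^{\alpha-n}\bigl(\int_{B_t(x)}v^q\bigr)\frac{dt}{t}$ by $Cu(z)$ for every $z\in B_\delta(x)$ via the inclusion $B_t(x)\subset B_{t+\delta}(z)$, and then averages over the ball using only $u\in L^{r_0}$ (and the same self-improving trick, $u(x_0)<\varepsilon+Cu(x)$ followed by averaging $u^{r_0}$, gives the decay). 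You instead prove a clean, self-contained potential-theory lemma --- $g\in L^{m_1}\cap L^{m_2}$ with $1\le m_1<\frac{n}{\alpha}<m_2$ forces $I_\alpha g$ bounded and vanishing at infinity --- which requires the extra bookkeeping that the window of Theorem \ref{th2.1}, after raising to the powers $q$ and $p$, straddles $\frac{n}{\alpha}$ from below as well; your verification of this via $\frac{\alpha}{qn}<\frac{1}{s_0}<\min\{\frac{n-\alpha}{n},\frac{pn-(p+1)\alpha}{n}\}$ and $\frac{\alpha}{pn}<\frac{1}{r_0}<\frac{n-\alpha}{n}$ is sound (these are the inequalities recorded after Remark 1.1 and in (\ref{slowfast})). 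What your approach buys is a standard, reusable lemma and a proof in which boundedness and decay fall out of the same three-region H\"older splitting; what the paper's approach buys is that it never needs integrability of $v^q$ below $\frac{n}{\alpha}$, only the single exponent $r_0$ for $u$ itself, so it is slightly more economical in its use of Theorem \ref{th2.1}. Your explicit reduction to $p\le q$ and your justification for assuming (\ref{ncc}) (nonexistence of integrable solutions in the subcritical case) are appropriate, since Theorem \ref{th2.1} implicitly uses both.
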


\begin{proof}
(1) Both the solutions $u$ and $v$ of (\ref{hls}) are bounded.

By exchanging the order of the integral variables, we have
$$\begin{array}{ll}
u(x) &\leq C(\displaystyle\int_0^1 \frac{\int_{B_t(x)}v^q(y)
dy}{t^{n-\alpha}} \frac{dt}{t}+\int_1^{\infty}
\frac{\int_{B_t(x)}v^q(y)
dy}{t^{n-\alpha}} \frac{dt}{t})\\[3mm]
&:=C(H_1+H_2).
\end{array}
$$
By H\"older's inequality, for any $l>1$,
$$
\int_{B_t(x)}v^q(y) dy \leq C\|v^q\|_l|B_t(x)|^{1-1/l}.
$$
Take $l$ sufficiently large such that
$\frac{1}{ql}=\varepsilon$ is sufficiently small.
According to Theorem 2.1, $\|w^q\|_l<\infty$.
Therefore,
$$
H_1 \leq C\int_0^1 \frac{|B_t(x)|^{1-q\varepsilon}}{t^{n -\alpha}}
\frac{dt}{t} \leq C\int_0^1 t^{\alpha-nq\varepsilon} \frac{dt}{t}
\leq C.
$$

If $z \in B_\delta(x)$, then $B_t(x) \subset B_{t+\delta}(z)$.
For $\delta \in (0,1)$ and $z \in B_\delta(x)$, it follows
\begin{equation} \label{tec}
\begin{array}{ll}
H_2&=\displaystyle\int_1^{\infty} \frac{\int_{B_t(x)}v^q(y)
dy}{t^{n-\alpha}} \frac{dt}{t}\\[3mm]
&\leq \displaystyle\int_1^{\infty}
\frac{\int_{B_{t+\delta}(z)}v^q(y) dy}{(t+\delta)^{n-\alpha}}
(\frac{t+\delta}{t})^{n-\alpha+1}
\frac{d(t+\delta)}{t+\delta}\\[3mm]
&\leq (1+\delta)^{n-\alpha+1}
\displaystyle\int_{1+\delta}^{\infty}
\frac{\int_{B_t(z)}v^q(y)dy}{t^{n-\alpha}} \frac{dt}{t}\leq Cu(z).
\end{array}
\end{equation}

Combining the estimates of $H_1$ and $H_2$, we have
$$
u(x) \leq C+Cu(z), \quad for \quad z \in B_\delta(x),
$$
where $\delta \in (0,1)$. Integrating on $B_\delta(x)$, we get
$$\begin{array}{ll}
u(x) &\leq C+C\displaystyle\int_{B_\delta(x)}u(z)dz \\[3mm]
&\leq
C+C\|u\|_{r_0}
|B_\delta(x)|^{1-\frac{1}{r_0}} \leq C.
\end{array}
$$
This shows $u$ is bounded in $R^n$. Similarly, $v$ is also bounded.

(2) We claim the solutions $u,v$ of (\ref{hls}) satisfy
\begin{equation}
\lim_{|x| \to \infty}u(x)=0, \quad \lim_{|x| \to \infty}v(x)=0. \label{decay}
\end{equation}

Take $x_0 \in R^n$. By (1),
$\|v\|_\infty<\infty$. Thus, $\forall \varepsilon>0$, there
exists $\delta \in (0,1)$ such that
$$
\int_0^{\delta} \frac{\int_{B_t(x_0)}v^q(z)dz}{t^{n-\alpha}}
\frac{dt}{t} \leq C\|v\|_\infty^q \int_0^{\delta} t^\alpha
\frac{dt}{t} <\varepsilon.
$$
On the other hand, similarly to the derivation of (\ref{tec}), as
$|x-x_0|<\delta$,
$$\begin{array}{ll}
&\displaystyle\int_{\delta}^{\infty}
\frac{\int_{B_t(x_0)}v^q(z)dz}{t^{n-\alpha}}
\frac{dt}{t}\\[3mm]
& \leq \displaystyle\int_{\delta}^{\infty}
\frac{\int_{B_{t+\delta}(x)}v^q(z)dz}{(t+\delta)^{n-\alpha}}
(\frac{t+\delta}{t})^{n-\alpha+1} \frac{d(t+\delta)}{t+\delta}\\[3mm]
& \leq C\displaystyle\int_0^{\infty}
\frac{\int_{B_t(x)}v^q(z)dz}{t^{n-\alpha}} \frac{dt}{t}=Cu(x).
\end{array}
$$
Combining these estimates, we get
$$
u(x_0)<\varepsilon+Cu(x), \quad for \quad |x-x_0|
<\delta.
$$
Since $u \in
L^{r_0}(R^n)$, there holds $\lim_{|x_0| \to
\infty}\int_{B_\delta(x_0)}u^{r_0}(x)dx=0$. Thus, we have
$$\begin{array}{ll}
u^{r_0}(x_0)
&=|B_\delta(x_0)|^{-1}\displaystyle\int_{B_\delta(x_0)}
u^{r_0}(x_0)dx\\[3mm] &\leq
C\varepsilon^{r_0}
+C|B_\delta(x_0)|^{-1}\displaystyle\int_{B_\delta(x_0)}u^{r_0}(x)dx
\to 0
\end{array}
$$
when $|x_0| \to \infty$ and $\varepsilon \to 0$.
Similarly, $v$ has the same result. Thus, (\ref{decay}) holds.
\end{proof}

\section{Fast decay of integrable solutions}

In this section, we always assume that $(u,v)$ is a pair of positive solutions
of the system (\ref{hls}) with (\ref{ncc}).

First we verify the integrable solutions decay fast. This argument includes
five propositions.

\begin{proposition}  \label{prop4.1}
$B_0:=\int_{R^n} v^q(y)dy  <\infty$.
\end{proposition}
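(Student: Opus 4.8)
\textbf{Proof proposal for Proposition \ref{prop4.1}.}

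The plan is to show the integral $B_0 = \int_{R^n} v^q(y)\,dy$ converges by splitting it into the region near the origin, where boundedness of $v$ (Theorem \ref{th3.1}) controls everything, and the region at infinity, where an integrability estimate from Theorem \ref{th2.1} does the work. Precisely, write $B_0 = \int_{B_1(0)} v^q + \int_{R^n \setminus B_1(0)} v^q$. On the bounded ball, since $v$ is bounded by Theorem \ref{th3.1}, we get $\int_{B_1(0)} v^q \leq \|v\|_\infty^q |B_1(0)| < \infty$ immediately.

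For the tail $\int_{R^n \setminus B_1(0)} v^q$, the idea is to apply Hölder's inequality to absorb the region of integration and trade the power $q$ on $v$ for an $L^s$ norm with $s$ in the admissible range of Theorem \ref{th2.1}. The cleanest route: pick $s$ with $\frac1s \in (0,\min\{\frac{n-\alpha}{n},\frac{pn-(p+1)\alpha}{n}\})$ small enough that $qs < \ldots$ — actually the simplest is to observe that by Theorem \ref{th2.1}, $v \in L^s(R^n)$ for every such $s$; choosing $s$ with $\frac1s$ very small but positive makes $s$ very large, so $v \in L^s$ for $s$ as large as we like (subject to $\frac1s$ staying below the threshold), and in particular we may take $s > q$. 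Then on $R^n \setminus B_1(0)$, writing $v^q = v^q \cdot 1$ and applying Hölder with exponents $s/q$ and $(s/q)' = s/(s-q)$ over a region of finite measure would fail since $R^n \setminus B_1(0)$ has infinite measure; instead one should simply note $\int_{R^n} v^q \leq$ — no. The correct observation is: since $v$ is bounded, $v^q \leq \|v\|_\infty^{q-s} v^s$ pointwise wherever $v \leq \|v\|_\infty$ and $s \leq q$; but we want $s \leq q$, i.e. a \emph{small} exponent. So instead choose $s \in (0,q]$ admissible, which requires $\frac1s \geq \frac1q$, i.e. $\frac1q < \min\{\frac{n-\alpha}{n},\frac{pn-(p+1)\alpha}{n}\}$; this inequality $\frac1q < \frac{n-\alpha}{n}$ and $\frac1q < \frac{pn-(p+1)\alpha}{n}$ follows from the non-subcritical condition (\ref{ncc}) together with $p \leq q$ (the second is exactly the kind of inequality derived in the proof of Theorem \ref{th2.2}). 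With such an $s \leq q$, boundedness gives $v^q = v^{q-s} v^s \leq \|v\|_\infty^{q-s} v^s$, hence $\int_{R^n} v^q \leq \|v\|_\infty^{q-s}\|v\|_s^s < \infty$ by Theorem \ref{th2.1}, and the splitting at $B_1(0)$ is not even needed.

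The step I expect to require the most care is verifying that an admissible exponent $s \leq q$ (equivalently $\frac1s \geq \frac1q$ with $\frac1s$ below the integrability threshold) actually exists — i.e. checking $\frac1q < \min\{\frac{n-\alpha}{n},\frac{pn-(p+1)\alpha}{n}\}$ under (\ref{ncc}) and $p \leq q$. The bound $\frac1q < \frac1{q+1}\cdot\frac{q+1}{q} \leq \frac{n-\alpha}{n}\cdot\frac{q+1}{q}$ is not quite immediate; one argues as in Theorem \ref{th2.2}: (\ref{ncc}) with $p \leq q$ forces $\frac{1}{q+1} < \frac{2n-\alpha}{2n}$ and then $\frac1q < \frac{n-\alpha}{n}$, while $\frac1q < \frac{pn-(p+1)\alpha}{n}$ is obtained from the chain of inequalities already spelled out in the proof of Theorem \ref{th2.2} (with $\frac{1}{q+1}$ replaced by the slightly larger $\frac1q$, using $pq>1$). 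Once existence of such an $s$ is in hand, the conclusion is a one-line Hölder/boundedness estimate. Alternatively, if the delicate inequality for $s \leq q$ proves awkward, one falls back on the split $B_0 = \int_{B_1} + \int_{B_1^c}$: the first term is bounded by $\|v\|_\infty^q|B_1|$, and for the second, choose any admissible $s$ and any $l > 1$ with $\frac1l$ small, apply Hölder on dyadic annuli $B_{2^{j+1}}\setminus B_{2^j}$ as in the proof of Theorem \ref{th3.1}, and sum a convergent geometric-type series in $j$; this is more computational but avoids pinning down the exact range of $s$.
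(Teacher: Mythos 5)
Your proposal is correct and, once the detours are stripped away, is essentially the paper's proof: one checks that $\frac{1}{q} < \min\{\frac{n-\alpha}{n}, \frac{pn-(p+1)\alpha}{n}\}$ under (\ref{ncc}) and $p\le q$, so that Theorem \ref{th2.1} applies with $s=q$ and gives $v\in L^q(R^n)$ directly --- the boundedness step $v^q\le\|v\|_\infty^{q-s}v^s$ and the splitting at $B_1(0)$ are then superfluous. One small repair to the verification you flag as delicate: $\frac{1}{q+1}<\frac{2n-\alpha}{2n}$ does not by itself imply $\frac{1}{q}<\frac{n-\alpha}{n}$; instead use $\frac{2}{q+1}\le\frac{1}{p+1}+\frac{1}{q+1}\le\frac{n-\alpha}{n}$ to get $\frac{1}{q}\le\frac{n-\alpha}{n+\alpha}<\frac{n-\alpha}{n}$, and obtain the second bound as the paper does from $\frac{q+1}{q(p+1)}=\frac{1}{p+1}+\frac{1}{q(p+1)}<\frac{1}{p+1}+\frac{1}{q+1}\le\frac{n-\alpha}{n}$.
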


\begin{proof}
By (\ref{ncc}) and $q \geq p$, we have
$$
\frac{1}{q} \leq \frac{n-\alpha}{n+\alpha}<\frac{n-\alpha}{n}.
$$
On the other hand, in view of
$\frac{1}{p+1}<1-\frac{1}{q+1}=\frac{q}{q+1}$, it follows from
(\ref{ncc}) that
\begin{equation} \label{jia}
\frac{q+1}{q(p+1)}=\frac{1}{p+1}+\frac{1}{q(p+1)}
<\frac{1}{p+1}+\frac{1}{q+1}<\frac{n-\alpha}{n}.
\end{equation}
Therefore, $1+\frac{1}{q}<(p+1)\frac{n-\alpha}{n}
=1+\frac{pn-(p+1)\alpha}{n}$. This implies
$$
\frac{1}{q} < \frac{pn-(p+1)\alpha}{n}.
$$
According to Theorem \ref{th2.1}, $v \in L^q(R^n)$.
\end{proof}

\begin{proposition} \label{prop4.2}
$$
\lim_{|x| \to \infty} u(x)|x|^{n-\alpha}=B_0.
$$
\end{proposition}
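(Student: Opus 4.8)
The plan is to show that the Riesz potential $u(x)=\int_{R^n}|x-y|^{\alpha-n}v^q(y)\,dy$, after multiplying by $|x|^{n-\alpha}$, converges to the total mass $B_0=\int_{R^n}v^q(y)\,dy$, which is finite by Proposition \ref{prop4.1}. The natural approach is to write
$$
u(x)|x|^{n-\alpha}=\int_{R^n}\frac{|x|^{n-\alpha}}{|x-y|^{n-\alpha}}\,v^q(y)\,dy
$$
and argue that the kernel $\frac{|x|^{n-\alpha}}{|x-y|^{n-\alpha}}$ tends to $1$ pointwise in $y$ as $|x|\to\infty$, then pass the limit inside the integral. The main work is justifying that interchange, since the kernel is unbounded near $y=x$.

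The key steps, in order: First, split $R^n$ into three regions relative to a large parameter: the ball $B_R(0)$ of fixed radius $R$ (the ``bulk'' of the mass), the annular region near $x$, say $B_{|x|/2}(x)$, and the remaining far region. On $B_R(0)$, for $|x|$ large we have $|x-y|\in[|x|-R,|x|+R]$, so $\frac{|x|^{n-\alpha}}{|x-y|^{n-\alpha}}\to 1$ uniformly in $y\in B_R(0)$, and this piece contributes $(1+o(1))\int_{B_R(0)}v^q\,dy$. Second, on $B_{|x|/2}(x)$ we estimate $\int_{B_{|x|/2}(x)}|x-y|^{\alpha-n}v^q(y)\,dy$; using $\|v\|_\infty<\infty$ (Theorem \ref{th3.1}) one bounds this by $C\|v\|_\infty^{q-\delta}\int_{B_{|x|/2}(x)}|x-y|^{\alpha-n}v^{q\delta}(y)\,dy$ and then, choosing $\delta$ small and applying H\"older as in the proof of Theorem \ref{th3.1}, shows that $|x|^{n-\alpha}$ times this integral is $o(1)$; alternatively one uses that $v^q\in L^1$ together with the decay of $v$ from Theorem \ref{th3.1}, part (2), to see that the mass of $v^q$ in $B_{|x|/2}(x)$ is $o(1)$ while $\frac{|x|^{n-\alpha}}{|x-y|^{n-\alpha}}$ there is still controlled after integrating. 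Third, on the far region $R^n\setminus(B_R(0)\cup B_{|x|/2}(x))$ we have $|x-y|\ge c|y|$ and $|x-y|\ge c|x|$ for appropriate constants, so $\frac{|x|^{n-\alpha}}{|x-y|^{n-\alpha}}\le C$ is uniformly bounded there, and since $\int_{|y|>R}v^q(y)\,dy\to 0$ as $R\to\infty$, this piece contributes at most $C\int_{|y|>R}v^q\,dy$, which is made small by taking $R$ large.

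Combining the three estimates, $\limsup_{|x|\to\infty}|u(x)|x|^{n-\alpha}-B_0|$ is bounded by a quantity that tends to $0$ as $R\to\infty$, which gives the claim. The main obstacle is the second step — controlling the contribution from the region where $y$ is close to $x$, where the kernel blows up. This is handled exactly by the same H\"older-and-$\varepsilon$ device already used to prove boundedness in Theorem \ref{th3.1} (splitting the radial integral $\int_0^{|x|/2}\frac{\int_{B_t(x)}v^q}{t^{n-\alpha}}\frac{dt}{t}$ and using $v^q\in L^l$ for $l$ large), so no genuinely new idea is needed; one just has to verify that after multiplication by $|x|^{n-\alpha}$ the bound still goes to zero, which follows since the near-$x$ contribution is $O(1)$ uniformly while the relevant mass localizes.
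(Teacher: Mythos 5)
Your decomposition into the fixed ball $B_R(0)$, the far region, and the near-singularity ball $B(x,|x|/2)$ is exactly the paper's ($L_1$, $L_2$, $L_3$ in its notation), and your treatment of the first and third regions (dominated convergence on $B_R$, the uniform bound $\frac{|x|^{n-\alpha}}{|x-y|^{n-\alpha}}\le C$ off $B(x,|x|/2)$ together with $\int_{|y|>R}v^q\to 0$) is correct and identical to the paper's. The problem is your second step. You must show
$$
|x|^{n-\alpha}\int_{B(x,|x|/2)}\frac{v^q(y)\,dy}{|x-y|^{n-\alpha}}=o(1),
$$
i.e.\ the near-$x$ integral must be $o(|x|^{\alpha-n})$ — a quantitative rate — whereas the H\"older-and-$\varepsilon$ device from Theorem \ref{th3.1} only yields that this integral is $O(1)$, or $o(1)$ if you exploit that the $L^l$-mass of $v^q$ localizes away from $x$. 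Neither survives multiplication by $|x|^{n-\alpha}\to\infty$. Concretely, your bound has the shape $C\,m(|x|)^{q-s_1}|x|^{n/l'-(n-\alpha)}$ with $m(|x|)=\sup_{|z|\ge|x|/2}v(z)$ and $n/l'>0$; after multiplying by $|x|^{n-\alpha}$ you are left needing $m(|x|)^{q-s_1}=o(|x|^{-n/l'})$, and Theorem \ref{th3.1} gives $m(|x|)\to 0$ with no rate. Your closing sentence ("the near-$x$ contribution is $O(1)$ uniformly while the relevant mass localizes") is precisely where the argument does not close.

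The missing ingredient in the paper is a pointwise decay \emph{rate} for $v$ on $\{|y|\ge|x|/2\}$. Since $(u,v)$ is an integrable solution, $v$ is radially symmetric and decreasing about some center (Theorem 3.1 in \cite{CL-DCDS}); combining monotonicity with $v\in L^s$ for $\frac1s$ close to $\frac1n\min\{n-\alpha,\,p(n-\alpha)-\alpha\}$ (Theorem \ref{th2.1}) gives $\tilde v(|x|/2)\le C|x|^{-n/s}$ via the standard estimate $\tilde v^s(|x|/2)|x|^n\le C\int_{B(0,|x|/2)\setminus B(0,|x|/4)}v^s$. Then $|x|^{n-\alpha}L_4\le C|x|^{n}\tilde v^q(|x|/2)\le C|x|^{n-qn/s}$, and a short case analysis (using (\ref{ncc}) to check $q(n-\alpha)>n$ and $q[p(n-\alpha)-\alpha]>n$) shows the exponent is negative. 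You should either import this symmetry-plus-integrability argument or supply some other quantitative decay for $v$; without it the second step is a genuine gap, not a routine verification.
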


\begin{proof}
For fixed $R>0$, write
$$
L_1:=\int_{B_R} v^q(y)
(\frac{|x|^{n-\alpha}}{|x-y|^{n-\alpha}}-1)dy.
$$
When $y \in B_R$ and $|x| \to \infty$,
$$
v^q(y)
|\frac{|x|^{n-\alpha}}{|x-y|^{n-\alpha}}-1| \leq
3 v^q(y)  \in L^1(R^n)
$$
by virtue of Proposition \ref{prop4.1}. Using Lebesgue's dominated convergence
theorem yields
$$
|L_1| \to 0,\quad as \quad |x| \to \infty.
$$
This result leads to
$$
\lim_{R \to \infty}\lim_{|x| \to \infty}
\int_{B_R} v^q(y)
\frac{|x|^{n-\alpha}}{|x-y|^{n-\alpha}}dy=B_0.
$$

Next, we write
$$
L_2:=\int_{(R^n\setminus B_R)\setminus B(x,|x|/2)} v^q(y)
\frac{|x|^{n-\alpha}}{|x-y|^{n-\alpha}}dy.
$$
Clearly, $|x-y| \geq |x|/2$ when $y \in
(R^n\setminus B_R)\setminus B(x,|x|/2)$.
Therefore, when $R \to \infty$,
$$
L_2 \leq C\int_{R^n\setminus
B_R} v^q(y)dy \to 0.
$$

We write
$$
L_3:=\int_{B(x,|x|/2)} v^q(y)
\frac{|x|^{n-\alpha}}{|x-y|^{n-\alpha}}dy
$$
and
$$
L_4:=\frac{L_3}{|x|^{n-\alpha}}
=\int_{B(x,|x|/2)}\frac{v^q(y)dy}{|x-y|^{n-\alpha}}.
$$
According to Theorem 3.1 in \cite{CL-DCDS}, $v$ is radially symmetric and
decreasing about $x_0 \in R^n$. Without loss of generality,
we can view $x_0$ as the origin when $|x|$ is sufficiently large.
Therefore,
$$
L_4 \leq
v^q(x/2) \int_{B(x,|x|/2)}\frac{dy}{
|x-y|^{n-\alpha}} \leq \frac{Cv^q(x/2)}{|x|^{-\alpha}}.
$$
Write $r=|x|$, and define $\tilde{v}(r)=v(x)$. Thus,
\begin{equation}
L_4 \leq C\tilde{v}^q(|x|/2)|x|^\alpha
\label{3.4}
\end{equation}

On the other hand, Theorem \ref{th2.1} shows that
$$
v \in L^s(R^n), \quad \frac{1}{s}=\frac{1}{n} \min\{n-\alpha,
p(n-\alpha)-\alpha\}-\frac{\varepsilon}{n}.
$$
Here $\varepsilon>0$ is sufficiently small. This integrability
result, together with the decreasing property of $v$, implies
\begin{equation}
\tilde{v}^s(|x|/2)|x|^n \leq C\int_{B(0,\frac{|x|}{2}) \setminus
B(0,\frac{|x|}{4})}v^s(y)dy \leq C. \label{3.5}
\end{equation}

We claim
\begin{equation}
|x|^{n-\alpha}L_4=o(1), \quad as \quad |x|\to \infty.
\label{3.6}
\end{equation}

We will prove (\ref{3.6}) in two cases.

Case 1. When $n-\alpha \leq
p(n-\alpha)-\alpha$, (\ref{3.5}) means
$$
\tilde{v}^q(|x|/2)|x|^{q(n-\alpha-\varepsilon)} \leq C.
$$
Combining this consequence with (\ref{3.4}) yields
$$
|x|^{q(n-\alpha-\varepsilon)-\alpha}L_4 \leq
C\tilde{v}^q(|x|/2)|x|^{q(n-\alpha-\varepsilon)} \leq C.
$$
In view of $q(n-\alpha)>n$, we have
$q(n-\alpha)-\alpha>n-\alpha$. Choosing
$\varepsilon$ properly small and letting $|x| \to \infty$ in the
result above, we can deduce (\ref{3.6}).

Case 2. When $n-\alpha> p(n-\alpha)-\alpha$,
(\ref{3.5}) implies
$$
\tilde{v}^q(|x|/2)|x|^{q(p(n-\alpha)-\alpha-\varepsilon)}
\leq C .
$$
Combining with (\ref{3.4}) yields
$$
|x|^{q(p(n-\alpha)-\alpha-\varepsilon)-\alpha}L_4
\leq C.
$$
By (\ref{jia}), it follows $q(p+1)(n-\alpha)>n+qn$. This is
equivalent to $q[p(n-\alpha)-\alpha]>n$. Choosing $\varepsilon$
properly small and letting $|x| \to \infty$ in the result above,
we can also obtain (\ref{3.6}).

Inserting (\ref{3.6}) into $L_3$, we derive that
$$
L_3 \to 0, \quad as \quad |x| \to \infty.
$$

Combining all the estimates of $L_1, L_2$ and $L_3$, we complete
the proof.
\end{proof}

\vskip 5mm

According to Proposition \ref{prop4.2},
there exists a properly large constant $R>0$ such that
\begin{equation}
u(x)=\frac{B_0+o(1)}{|x|^{n-\alpha}}, \quad for \quad x \in
R^n\setminus B_R. \label{3.8}
\end{equation}
Hereafter, we will use (\ref{3.8}) to investigate the decay rate of $v$.

\begin{proposition} \label{prop4.3}
If $p(n-\alpha)>n$, then $B_1=\int_{R^n} {u^p(y)dy} < \infty$. In
addition,
$$
\lim_{|x| \to \infty}v(x)|x|^{n-\alpha}= B_1.
$$
\end{proposition}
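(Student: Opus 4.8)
The plan is to mimic the proof of Proposition \ref{prop4.2} verbatim, but now the sharp pointwise estimate \eqref{3.8} on $u$ replaces the radial-monotonicity argument that was needed there for $v$; this makes the needed bound on the ``near-diagonal'' piece completely elementary, and it is precisely the hypothesis $p(n-\alpha)>n$ that makes both that piece and the finiteness of $B_1$ work.

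First I would establish $B_1<\infty$. By \eqref{3.8} there is $R>0$ with $u(y)\le C|y|^{\alpha-n}$ for $|y|>R$, hence $u^p(y)\le C|y|^{-p(n-\alpha)}$ there, and since $p(n-\alpha)>n$ this is integrable over $R^n\setminus B_R$; on $B_R$, $u$ is bounded by Theorem \ref{th3.1}, so $u^p\in L^1(B_R)$. Adding gives $B_1=\int_{R^n}u^p<\infty$.

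Next, for the limit I would fix $R$ and split
$$
v(x)|x|^{n-\alpha}=\int_{R^n}u^p(y)\frac{|x|^{n-\alpha}}{|x-y|^{n-\alpha}}\,dy
=\int_{B_R}u^p(y)\,dy+M_1+M_2+M_3,
$$
where $M_1=\int_{B_R}u^p(y)\big(\tfrac{|x|^{n-\alpha}}{|x-y|^{n-\alpha}}-1\big)\,dy$, $M_2$ is the integral over $(R^n\setminus B_R)\setminus B(x,|x|/2)$, and $M_3$ is the integral over $B(x,|x|/2)$ (note $B(x,|x|/2)\subset R^n\setminus B_R$ once $|x|>2R$). For $y\in B_R$ the ratio $|x|^{n-\alpha}/|x-y|^{n-\alpha}\to1$ pointwise and stays bounded by a fixed constant once $|x|\ge 2R$, so $M_1\to0$ as $|x|\to\infty$ by Lebesgue dominated convergence. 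On the domain of $M_2$ one has $|x-y|\ge|x|/2$, hence $M_2\le 2^{n-\alpha}\int_{R^n\setminus B_R}u^p(y)\,dy\to0$ as $R\to\infty$ since $B_1<\infty$.

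The only genuinely new point is $M_3$. For $y\in B(x,|x|/2)$ and $|x|$ large one has $|y|\ge|x|/2>R$, so \eqref{3.8} yields $u^p(y)\le C|x|^{-p(n-\alpha)}$; combining this with $\int_{B(x,|x|/2)}|x-y|^{\alpha-n}\,dy\le C|x|^{\alpha}$ gives
$$
M_3\le C|x|^{n-\alpha}\cdot|x|^{-p(n-\alpha)}\cdot|x|^{\alpha}=C|x|^{\,n-p(n-\alpha)}\to0\qquad\text{as }|x|\to\infty,
$$
again by $p(n-\alpha)>n$. Letting $|x|\to\infty$ first (so $M_1,M_3\to0$) and then $R\to\infty$ (so $\int_{B_R}u^p\to B_1$ and $M_2\to0$) gives $v(x)|x|^{n-\alpha}\to B_1$. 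I expect the main, and rather mild, obstacle to be purely a matter of bookkeeping the order of the two limits exactly as in Proposition \ref{prop4.2}, and of being careful that the estimate of $M_3$ genuinely invokes the sharp rate \eqref{3.8}: without it, a naive Hölder estimate on $B(x,|x|/2)$ could not absorb the factor $|x|^{n-\alpha}$.
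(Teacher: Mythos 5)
Your proof is correct and follows essentially the same route as the paper: dominated convergence on $B_R$, plus the asymptotic $u(y)\leq C|y|^{\alpha-n}$ from (\ref{3.8}) together with $p(n-\alpha)>n$ to make the exterior contribution vanish. The only cosmetic differences are that you deduce $B_1<\infty$ from the pointwise decay and the boundedness of $u$ rather than from the integrability in Theorem \ref{th2.1}, and that your explicit splitting of the exterior into $B(x,|x|/2)$ and its complement simply supplies the detail behind the estimate (\ref{3.10}) that the paper asserts without proof.
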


\begin{proof}
According to Theorem \ref{th2.1}, by $p(n-\alpha)>n$ we can also prove
$B_1<\infty$.

When $y \in B_R$ for large $R>0$, $\lim_{|x| \to
\infty}\frac{|x|}{|x-y|}=1$. By $B_1<\infty$ and the Lebesgue
dominated convergence theorem, it follows that
\begin{equation}
\lim_{R \to \infty}\lim_{|x| \to \infty}
\int_{B_R}\frac{|x|^{n-\alpha}u^p(y)dy}{|x-y|^{n-\alpha}}
=B_1. \label{3.9}
\end{equation}

In view of $p(n-\alpha)>n$, it is not
difficult to deduce that
\begin{equation}
\int_{R^n\setminus
B_R}\frac{|x|^{n-\alpha}dy}{|x-y|^{n-\alpha}|y|^{p(n-\alpha)}}
=o(1),\quad as \quad |x| \to \infty, R \to \infty. \label{3.10}
\end{equation}

By virtue of (\ref{3.8}), we have
$$
|x|^{n-\alpha}v(x)
=\int_{B_R}\frac{|x|^{n-\alpha}u^p(y)dy}{|x-y|^{n-\alpha}}
+\int_{R^n\setminus B_R}
\frac{(B_0+o(1))^p|x|^{n-\alpha}dy}{|x-y|^{n-\alpha}|y|^{p(n-\alpha)}}.
$$
Inserting (\ref{3.9}) and (\ref{3.10}) into the identity above, we get
$$
|x|^{n-\alpha}v(x) \to B_1, \quad as \quad |x| \to \infty.
$$
Proposition \ref{prop4.3} is proved.
\end{proof}

\begin{proposition} \label{prop4.4}
If $p(n-\alpha)=n$, then
$$
\lim_{|x| \to \infty}|x|^{n-\alpha}(\ln|x|)^{-1}v(x)=
B_0^p|S^{n-1}|.
$$
\end{proposition}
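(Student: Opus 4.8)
The plan is to feed the sharp first-order asymptotics of $u$ from Proposition \ref{prop4.2} into the defining integral for $v$ and then isolate the logarithmically divergent part of the resulting Riesz potential. Since $p(n-\alpha)=n$, Proposition \ref{prop4.2} gives $u^p(y)|y|^n\to B_0^p$ as $|y|\to\infty$. Fix $\epsilon\in(0,B_0)$ and choose $R>0$ so large that $(B_0-\epsilon)^p|y|^{-n}\le u^p(y)\le(B_0+\epsilon)^p|y|^{-n}$ for all $|y|\ge R$. Writing
$$
v(x)=\int_{B_R}\frac{u^p(y)\,dy}{|x-y|^{n-\alpha}}+\int_{R^n\setminus B_R}\frac{u^p(y)\,dy}{|x-y|^{n-\alpha}},
$$
for $|x|\ge 2R$ the first term is at most $2^{n-\alpha}\|u\|_\infty^p|B_R|\,|x|^{\alpha-n}=O(|x|^{\alpha-n})$ (using boundedness of $u$ from Theorem \ref{th3.1}), hence negligible after multiplication by $|x|^{n-\alpha}(\ln|x|)^{-1}$. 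So the problem reduces to the outer integral, which is trapped between $(B_0\mp\epsilon)^p\,I_R(x)$, where $I_R(x):=\int_{R^n\setminus B_R}|x-y|^{\alpha-n}|y|^{-n}\,dy$.

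The core step is to show that for every fixed $R>0$,
$$
\lim_{|x|\to\infty}\frac{|x|^{n-\alpha}}{\ln|x|}\,I_R(x)=|S^{n-1}|.
$$
To prove this I would, for a parameter $\delta\in(0,1/2)$, split $R^n\setminus B_R$ into $A_1=\{R\le|y|\le\delta|x|\}$, $A_2=\{\delta|x|<|y|<2|x|\}$ and $A_3=\{|y|\ge 2|x|\}$. On $A_1$ one has $(1-\delta)|x|\le|x-y|\le(1+\delta)|x|$, so, passing to polar coordinates,
$$
(1+\delta)^{\alpha-n}|x|^{\alpha-n}|S^{n-1}|\ln\tfrac{\delta|x|}{R}\le\int_{A_1}\frac{dy}{|x-y|^{n-\alpha}|y|^n}\le(1-\delta)^{\alpha-n}|x|^{\alpha-n}|S^{n-1}|\ln\tfrac{\delta|x|}{R};
$$
since $\ln(\delta|x|/R)/\ln|x|\to1$, the $A_1$-contribution to $\frac{|x|^{n-\alpha}}{\ln|x|}I_R(x)$ has $\liminf\ge(1+\delta)^{-(n-\alpha)}|S^{n-1}|$ and $\limsup\le(1-\delta)^{-(n-\alpha)}|S^{n-1}|$. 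On $A_2$ we bound $|y|^{-n}\le(\delta|x|)^{-n}$ and $\int_{A_2}|x-y|^{\alpha-n}\,dy\le\int_{|x-y|\le3|x|}|x-y|^{\alpha-n}\,dy=C|x|^\alpha$, so the $A_2$-part is $\le C\delta^{-n}|x|^{\alpha-n}$; on $A_3$ the inequality $|x-y|\ge|y|/2$ gives the $A_3$-part $\le 2^{n-\alpha}\int_{|y|\ge2|x|}|y|^{\alpha-2n}\,dy=C|x|^{\alpha-n}$. Both of these are $O(|x|^{\alpha-n})=o(|x|^{\alpha-n}\ln|x|)$ and so vanish after multiplication by $|x|^{n-\alpha}(\ln|x|)^{-1}$. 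Combining the three pieces and then letting $\delta\to0$ gives the displayed limit.

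Putting the two parts together yields
$$
(B_0-\epsilon)^p|S^{n-1}|\le\liminf_{|x|\to\infty}\frac{|x|^{n-\alpha}v(x)}{\ln|x|}\le\limsup_{|x|\to\infty}\frac{|x|^{n-\alpha}v(x)}{\ln|x|}\le(B_0+\epsilon)^p|S^{n-1}|,
$$
and letting $\epsilon\to0$ gives the claim $\lim_{|x|\to\infty}|x|^{n-\alpha}(\ln|x|)^{-1}v(x)=B_0^p|S^{n-1}|$. I expect the main obstacle to be the ``diagonal'' region $A_2$, where $|y|\approx|x|$ and neither $|y|^{-n}$ nor $|x-y|^{\alpha-n}$ is comparable to its value at the generic scale $|y|\sim\delta|x|$; the remedy is exactly the two-stage limit (first $|x|\to\infty$ with $\delta$ frozen, then $\delta\to0$), which confines that region to a negligible effect on the $\ln|x|$ scale. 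Everything else reduces to a routine change to polar coordinates.
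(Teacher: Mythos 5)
Your proposal is correct and follows essentially the same route as the paper: substitute the asymptotics $u(y)\simeq B_0|y|^{\alpha-n}$ from Proposition \ref{prop4.2} (so $u^p(y)\simeq B_0^p|y|^{-n}$ since $p(n-\alpha)=n$), discard the contribution of a fixed ball $B_R$, extract the factor $|S^{n-1}|\ln|x|$ from the annulus $R\le|y|\le\delta|x|$, and dispose of the remaining region by a bound uniform in $|x|$ before taking the iterated limit $|x|\to\infty$, then $\delta\to0$. The only cosmetic difference is that the paper rescales to $z=y/|x|$ and treats $\{|z|\ge\delta\}$ in one stroke via integrability of $|z|^{-n}|e-z|^{\alpha-n}$, whereas you stay in the original variables and split that region into $A_2$ and $A_3$; both are sound.
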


\begin{proof}
By virtue of (\ref{3.8}), for large $R>0$, we have
\begin{equation} \label{3.11}
\begin{array}{ll}
\displaystyle\frac{|x|^{n-\alpha}}{\ln|x|}v(x)=
&\displaystyle\frac{1}{\ln|x|}\int_{B_R}\frac{|x|^{n-\alpha}u^p(y)dy}{|x-y|^{n-\alpha}}\\[3mm]
&+\displaystyle\frac{(B_0+o(1))^p}{\ln|x|}\int_{R^n\setminus B_R}
\frac{|x|^{n-\alpha}dy}{|x-y|^{n-\alpha}|y|^{n}}.
\end{array}
\end{equation}

Eq. (\ref{3.9}) implies that as $|x| \to \infty$,
\begin{equation}
\frac{1}{\ln|x|}\int_{B_R}\frac{|x|^{n-\alpha}u^p(y)dy}{|x-y|^{n-\alpha}}
=o(1). \label{3.12}
\end{equation}

On the other hand, for the large constant $R>0$ and the small
constant $\delta \in (0,1/2)$,
\begin{equation} \label{3.13}
\begin{array}{ll}
\displaystyle\frac{1}{\ln|x|}\int_{R^n\setminus B_R}
\frac{|x|^{n-\alpha}dy}{|x-y|^{n-\alpha}|y|^n}
=&\displaystyle\frac{1}{\ln|x|}\int_{\frac{R}{|x|}}^{\delta}\int_{S^{n-1}}
\frac{dr}{r|e-rw|^{n-\alpha}}\\[3mm]
&+\displaystyle\frac{1}{\ln|x|}\int_{R^n\setminus
B_\delta } \frac{dz}{|z|^n|e-z|^{n-\alpha}}.
\end{array}
\end{equation}

Indeed, the second term of the right hand side is finite since
$n-\alpha<n$ near $e$; and $n-\alpha+n>n$ near infinity. Moreover,
the upper bound only depends on $\delta$. Letting $|x| \to
\infty$, we have
\begin{equation}
\frac{1}{\ln|x|}\int_{R^n\setminus B_\delta }
\frac{dz}{|z|^n|e-z|^{n-\alpha}}=o(1). \label{3.14}
\end{equation}

When $r \in (0,\delta)$, $1-\delta \leq |e-rw| \leq 1+\delta$.
There exists $\theta \in (-1,1)$ such that
$|e-rw|=1+\theta\delta$. Thus, the first term of (\ref{3.13})
$$\begin{array}{ll}
\displaystyle\frac{1}{\ln|x|}\int_{\frac{R}{|x|}}^{\delta}
\int_{S^{n-1}}\frac{dr}{r|e-rw|^{n-\alpha}}
&=\displaystyle\frac{|S^{n-1}|}{(1+\theta\delta)^{n-\alpha}\ln|x|}(\ln\delta-\ln
R+\ln|x|)\\[3mm]
& \to \displaystyle\frac{|S^{n-1}|}{(1+\theta\delta)^{n-\alpha}}
\quad (|x| \to
\infty)\\[3mm]
&\to |S^{n-1}| \quad (\delta \to 0).
\end{array}
$$

Substituting this result and (\ref{3.14}) into (\ref{3.13}), we have
$$
\frac{1}{\ln|x|}\int_{R^n\setminus B_R}
\frac{|x|^{n-\alpha}dy}{|x-y|^{n-\alpha}|y|^n} \to |S^{n-1}|, \quad
as \quad |x| \to \infty.
$$
Combining with (\ref{3.11}) and (\ref{3.12}) we can complete Proposition \ref{prop4.4}.
\end{proof}

\begin{proposition} \label{prop4.5}
If $p(n-\alpha)<n$, then
$$
B_3:=B_0^p\int_{R^n}|z|^{-(n-\alpha)p}|e-z|^{-n+\alpha}dz
< \infty.
$$
In addition,
$$
\lim_{|x| \to \infty}|x|^{pn-\alpha(p+1)}v(x)=
B_3.
$$
\end{proposition}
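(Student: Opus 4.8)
The plan is to mimic the structure of Propositions \ref{prop4.3} and \ref{prop4.4}: substitute the sharp asymptotics (\ref{3.8}) for $u$ into the representation $v(x)=\int_{R^n}u^p(y)|x-y|^{\alpha-n}dy$, split off the bounded core $B_R$, and extract the leading term of the tail by a rescaling $y=|x|z$. Along the way the finiteness of $B_3$ will drop out for free.

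First I would fix $\varepsilon_0\in(0,B_0)$ and use Proposition \ref{prop4.2} to choose $R>0$ so large that on $R^n\setminus B_R$ one can write $u^p(y)=\phi(y)|y|^{p(\alpha-n)}$ with $B_0-\varepsilon_0<\phi(y)^{1/p}<B_0+\varepsilon_0$ and $\phi(y)\to B_0^p$ as $|y|\to\infty$. For $|x|>2R$ split $v(x)=I_1(x)+I_2(x)$, where $I_1(x)=\int_{B_R}u^p(y)|x-y|^{\alpha-n}dy$ and $I_2(x)=\int_{R^n\setminus B_R}\phi(y)|y|^{p(\alpha-n)}|x-y|^{\alpha-n}dy$. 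On $B_R$ one has $|x-y|\ge|x|/2$ and $\int_{B_R}u^p\le C(R)$ by Theorem \ref{th3.1}, so $I_1(x)\le C(R)|x|^{\alpha-n}$; multiplying by $|x|^{pn-\alpha(p+1)}$ gives $C(R)|x|^{(p-1)n-\alpha p}\to0$, the exponent being negative precisely because $p(n-\alpha)<n$.

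For the tail I would change variables $y=|x|z$ and set $e=x/|x|$, which, using $n-p(n-\alpha)-(n-\alpha)=-(pn-\alpha(p+1))$, turns $|x|^{pn-\alpha(p+1)}I_2(x)$ into $\int_{R^n\setminus B_{R/|x|}}\phi(|x|z)|z|^{-p(n-\alpha)}|e-z|^{\alpha-n}dz$. As $|x|\to\infty$ the domain exhausts $R^n$ and $\phi(|x|z)\to B_0^p$ for each $z\ne0$, while the integrand is dominated by $(B_0+\varepsilon_0)^p|z|^{-p(n-\alpha)}|e-z|^{\alpha-n}$. This majorant lies in $L^1(R^n)$: integrable near $0$ because $p(n-\alpha)<n$ (the hypothesis), near $z=e$ because $n-\alpha<n$, and near infinity because $(p+1)(n-\alpha)>n$, which follows from (\ref{jia}). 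Hence $B_3<\infty$ and, by dominated convergence, $|x|^{pn-\alpha(p+1)}I_2(x)\to B_0^p\int_{R^n}|z|^{-p(n-\alpha)}|e-z|^{\alpha-n}dz=B_3$ (the integral being rotation-invariant, hence independent of $e$). Combined with the estimate for $I_1$ this gives $\lim_{|x|\to\infty}|x|^{pn-\alpha(p+1)}v(x)=B_3$, which matches the exponent $(\alpha-n)(p+1)+n$ in Theorem \ref{th1.2}.

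The only delicate point is the uniform domination after rescaling: $R$ must be chosen depending on $\varepsilon_0$ so that $\phi$ is genuinely bounded on the entire region $R^n\setminus B_R$ (not merely convergent there), and then the three integrability conditions for the majorant must be checked — the near-origin one being exactly the standing hypothesis $p(n-\alpha)<n$ and the near-infinity one being supplied by (\ref{jia}). Everything else is routine bookkeeping parallel to the preceding propositions.
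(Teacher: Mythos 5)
Your proof is correct and follows essentially the same route as the paper's: both substitute the asymptotics (\ref{3.8}) into the representation of $v$, split the integral at $B_R$, kill the core term using $p(n-\alpha)<n$, and rescale the tail by $y=|x|z$ to identify the limit $B_3$ via the same three integrability checks (near $0$, near $e$, near infinity). The only difference is cosmetic: you keep the error factor $\phi(y)$ inside the integral and invoke dominated convergence with an explicit majorant, whereas the paper pulls a $(B_0+o(1))^p$ factor outside; your version is the more careful rendering of the identical argument.
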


\begin{proof}
It is easy to see $B_3<\infty$, since we observe that
$\frac{1}{p+1}<\frac{n-\alpha}{n}$ means that the integral
decays at the rate $(n-\alpha)(p+1)>n$ near infinite,
$n-\alpha<n$ near $e$, and $p(n-\alpha)<n$ near the origin.

For large $R>0$, using
(\ref{3.8}) we have
\begin{equation}
\begin{array}{ll}
|x|^{pn-\alpha(p+1)}v(x)=&|x|^{p(n-\alpha)-n}
\displaystyle\int_{B_R}\frac{|x|^{n-\alpha}u^p(y)dy}{|x-y|^{n-\alpha}}\\[3mm]
&+(B_0+o(1))^p\displaystyle\int_{R^n \setminus B_R}
\frac{|x|^{(p+1)(n-\alpha)-n}dy}{|x-y|^{n-\alpha}|y|^{p(n-\alpha)}}.
\end{array}
\label{3.15}
\end{equation}

When $y \in B_R$, and $|x| \to \infty$,
\begin{equation}
|x|^{p(n-\alpha)-n}
\int_{B_R}\frac{|x|^{n-\alpha}u^p(y)dy}{|x-y|^{n-\alpha}}
\leq C |x|^{p(n-\alpha)-n} \to 0, \label{3.16}
\end{equation}
since $p(n-\alpha)<n$.

On the other hand, when $|x| \to \infty$,
$$
\int_{R^n \setminus B_R}
\frac{|x|^{(p+1)(n-\alpha)-n}dy}{|x-y|^{n-\alpha}|y|^{p(n-\alpha)}}
=\int_{R^n \setminus B_{R/|x|}}
\frac{dz}{|z|^{p(n-\alpha)}|e-z|^{n-\alpha}} \to
\frac{B_3}{B_0^p}.
$$
Inserting this result and (\ref{3.16}) into (\ref{3.15}), we complete the
proof of Proposition.
\end{proof}

Next, we verify that the bounded solutions with fast decay rates
must be the integrable solutions.

\begin{proposition} \label{prop4.6}
Let $u,v$ solve (\ref{hls}) with (\ref{ncc}). If they are bounded and decay fast, then
$(u,v) \in L^{r_0}(R^n) \times L^{s_0}(R^n)$.
\end{proposition}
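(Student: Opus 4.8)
The plan is to exploit the two hypotheses separately: boundedness controls $u$ and $v$ near the origin, while the fast decay controls them near infinity, so that the whole statement collapses to checking two elementary inequalities between exponents. Fix a large radius $R$ beyond which the fast decay estimates of part (2) of Theorem \ref{th1.2} hold. Since $u,v \in L^\infty(R^n)$, the integrals $\int_{B_R}u^{r_0}\,dx$ and $\int_{B_R}v^{s_0}\,dx$ are trivially finite, so it suffices to estimate the tails $\int_{R^n \setminus B_R}u^{r_0}\,dx$ and $\int_{R^n \setminus B_R}v^{s_0}\,dx$.

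For $u$: the fast decay gives $u(x) \le C|x|^{\alpha-n}$ for $|x| > R$, hence, passing to polar coordinates, $\int_{R^n\setminus B_R}u^{r_0}\,dx \le C\int_R^\infty \rho^{(\alpha-n)r_0 + n-1}\,d\rho$, which is finite precisely when $(n-\alpha)r_0 > n$. By the definition $r_0 = \frac{n(pq-1)}{\alpha(q+1)}$, this is equivalent to $\frac{\alpha(q+1)}{pq-1} < n-\alpha$, which is exactly the strict inequality recorded in the introduction (Theorem 1.5(2) of \cite{LeiLi}) under the non-subcritical condition (\ref{ncc}) with $p \le q$; it is also (\ref{slowfast}) when the critical condition (\ref{cc}) holds. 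Hence $u \in L^{r_0}(R^n)$.

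For $v$: I would split into the three cases according to the sign of $p(n-\alpha)-n$. If $p(n-\alpha) > n$, the fast decay is $v(x) \le C|x|^{\alpha-n}$ and the tail is finite iff $(n-\alpha)s_0 > n$, i.e. $\frac{\alpha(p+1)}{pq-1} < n-\alpha$. If $p(n-\alpha) = n$, one has $v(x) \le C|x|^{\alpha-n}\ln|x|$, and the extra logarithm is harmless because the same strict polynomial inequality $(n-\alpha)s_0 > n$ holds. If $p(n-\alpha) < n$, then $v(x) \le C|x|^{(\alpha-n)(p+1)+n} = C|x|^{-(pn-(p+1)\alpha)}$ and the tail is finite iff $(pn-(p+1)\alpha)s_0 > n$, i.e. $\frac{\alpha(p+1)}{pq-1} < pn-(p+1)\alpha$. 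All three required inequalities are subsumed by $\frac{\alpha(p+1)}{pq-1} < \min\{n-\alpha, pn-(p+1)\alpha\}$, which is the second strict inequality noted in the introduction. Therefore $v \in L^{s_0}(R^n)$, completing the proof.

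There is no deep obstacle here: the argument is essentially bookkeeping built on the fast-decay hypothesis together with strict exponent inequalities that were already established (via \cite{LeiLi}, and implicitly in the proof of Theorem \ref{th2.1}). The one point needing a little care is the borderline case $p(n-\alpha)=n$, where one must be sure that the relevant exponent inequality is strict so that the logarithmic factor does not spoil integrability at infinity. If one preferred a fully self-contained proof, the work would be shifted into re-deriving $\frac{\alpha(q+1)}{pq-1}<n-\alpha$ and $\frac{\alpha(p+1)}{pq-1}<\min\{n-\alpha,pn-(p+1)\alpha\}$ directly from (\ref{ncc}) and $p\le q$, as is done for the claims (\ref{claim1})--(\ref{claim2}) in the proof of Theorem \ref{th2.1}.
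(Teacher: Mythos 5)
Your proposal is correct and follows essentially the same route as the paper: bound the integrals over a fixed ball by boundedness, reduce the tail integrals in polar coordinates to the strict exponent inequalities $(n-\alpha)r_0>n$ and $[\min\{n-\alpha,pn-(p+1)\alpha\}]s_0>n$ (with the logarithm absorbed by an $\epsilon$ in the borderline case), and deduce these inequalities from (\ref{ncc}) and $p\le q$. The only cosmetic difference is that you cite the inequalities $\frac{\alpha(q+1)}{pq-1}<n-\alpha$ and $\frac{\alpha(p+1)}{pq-1}<\min\{n-\alpha,pn-(p+1)\alpha\}$ from the introduction, whereas the paper re-derives them inline from (\ref{ncc}); you correctly flag this and indicate how the self-contained derivation would go.
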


\begin{proof}
From (\ref{ncc}), we can see easily that $p>\frac{\alpha}{n-\alpha}$.
This results together with (\ref{ncc}) lead to
$$
\frac{\alpha}{(n-\alpha)(p+1)}<1-(\frac{1}{p+1}+\frac{1}{q+1}).
$$
Multiplying by $(p+1)(q+1)$ yields $\frac{\alpha(q+1)}{n-\alpha}<pq-1$.
This implies $n<(n-\alpha)r_0$. Since $u$ is bounded and decay fast,
there holds
$$
\int_{R^n} u^{r_0}(x)dx \leq C+C\int_R^\infty r^{n-(n-\alpha)r_0} \frac{dr}{r}<\infty.
$$
Namely, $u \in L^{r_0}(R^n)$.

Noting $p\leq q$, we also have $n-(n-\alpha)s_0<0$.
if $v$ is bounded and decaying with the rate $|x|^{\alpha-n}$,
we also deduce $v \in L^{s_0}$ by the same argument above.

If $v$ is bounded and decaying with the rate $|x|^{\alpha-n}\ln|x|$, then
there exists a suitably large $R>0$ such that $(\ln|x|)^{s_0} \leq |x|^\epsilon$
for $|x|>R$, where $\epsilon>0$ is sufficiently small. Then, by
$n-(n-\alpha)s_0<0$, we also get
$$
\int_{R^n} v^{s_0}(x)dx \leq C+C\int_R^\infty r^{n-(n-\alpha)s_0+\epsilon}
\frac{dr}{r}<\infty.
$$

Let $v$ be bounded and decaying with the rate $|x|^{(\alpha-n)(p+1)+n}$. From
(\ref{ncc}) we have $\frac{1}{p+1}<\frac{n-\alpha}{n}$. This and (\ref{ncc})
lead to $\frac{\alpha(q+1)}{pq-1}<n-\alpha$, which implies
$\frac{pn(pq-1)}{\alpha(p+1)}>pq$. From this we deduce that
$n-[pn-\alpha(p+1)]s_0<0$, and hence
$$
\int_{R^n} v^{s_0}dx \leq C+\int_R^\infty r^{n-[(n-\alpha)(p+1)+n]s_0} \frac{dr}{r}
<\infty.
$$
This means $v \in L^{s_0}(R^n)$.
\end{proof}

The argument in Sections 3 and 4 shows that (1) and (2)
in Theorem \ref{th1.2} are equivalent. Combining with the argument
in Section 2, we complete the proof of Theorem \ref{th1.2}.

In addition, we can also prove directly the following result.

\begin{proposition} \label{prop4.7}
Items (2) and (3) in Theorem \ref{th1.2} are also equivalent.
\end{proposition}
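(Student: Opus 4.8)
The plan is to establish Proposition \ref{prop4.7} by chaining together the characterizations already in hand, rather than proving a direct equivalence from scratch. Recall that item (3) asserts $(u,v)\in L^{p+1}(R^n)\times L^{q+1}(R^n)$, i.e. $u,v$ are finite energy solutions, while item (2) asserts that $u,v$ are bounded and decay with the stated fast rates. The direction (2) $\Rightarrow$ (3) is essentially already done: Proposition \ref{prop4.6} shows that bounded solutions with fast decay lie in $L^{r_0}(R^n)\times L^{s_0}(R^n)$, i.e. they are integrable solutions, and then Theorem \ref{th2.2} upgrades integrable solutions to finite energy solutions under (\ref{ncc}). So for this half I would simply cite Proposition \ref{prop4.6} followed by Theorem \ref{th2.2}.

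For the converse (3) $\Rightarrow$ (2), the route is: a finite energy solution is, by Theorem \ref{th2.3}, an integrable solution, i.e. $(u,v)\in L^{r_0}(R^n)\times L^{s_0}(R^n)$; then Theorem \ref{th3.1} gives that $u,v$ are bounded and tend to zero at infinity; and finally Propositions \ref{prop4.1}--\ref{prop4.5} (the five-proposition argument of Section 4) pin down the precise fast decay rates — $u(x)\simeq|x|^{\alpha-n}$ always, and the three cases for $v$ according to the sign of $p(n-\alpha)-n$. Stringing these references together yields item (2). Thus the proof is really just a diagram-chase: (3) $\Leftrightarrow$ (1) by Theorems \ref{th2.2}--\ref{th2.3}, and (1) $\Leftrightarrow$ (2) by Theorems \ref{th3.1}, \ref{th2.1} and Propositions \ref{prop4.1}--\ref{prop4.6}. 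I would present it as two short paragraphs, one for each implication, each consisting of a sentence or two naming the results applied in sequence.

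There is essentially no obstacle here, since every ingredient has already been proved; the only care needed is to note that "directly" in the statement means we are recording the equivalence as a standalone fact even though it also follows formally from the already-established equivalences (2) $\Leftrightarrow$ (1) and (1) $\Leftrightarrow$ (3). The one place I would be slightly careful is to make sure the decay dichotomy for $v$ in item (2) is genuinely exhaustive and consistent with what Propositions \ref{prop4.3}--\ref{prop4.5} deliver — the three subcases $p(n-\alpha)>n$, $=n$, $<n$ correspond exactly to those three propositions, with the logarithmic correction appearing only in the borderline case, so the match is clean. I would also remark, for completeness, that the hypothesis $p\le q$ from Remark 1.2 is in force throughout Section 4, so nothing extra is needed. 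In short: the proof of Proposition \ref{prop4.7} is a two-line citation argument, and I expect writing it to be routine.

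\begin{proof}
Assume first that (2) holds, i.e. $u,v$ are bounded and decay with the fast rates described in Theorem \ref{th1.2}(2). By Proposition \ref{prop4.6}, $(u,v)\in L^{r_0}(R^n)\times L^{s_0}(R^n)$, so $u,v$ are integrable solutions; then Theorem \ref{th2.2} shows that $u,v$ are finite energy solutions, which is (3).

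Conversely, assume (3), i.e. $(u,v)\in L^{p+1}(R^n)\times L^{q+1}(R^n)$. By Theorem \ref{th2.3}, $(u,v)\in L^{r_0}(R^n)\times L^{s_0}(R^n)$, so $u,v$ are integrable solutions. By Theorem \ref{th3.1}, $u,v$ are bounded and tend to zero as $|x|\to\infty$. Finally, Propositions \ref{prop4.1} and \ref{prop4.2} give $u(x)\simeq|x|^{\alpha-n}$, and Propositions \ref{prop4.3}, \ref{prop4.4}, \ref{prop4.5} give, respectively in the cases $p(n-\alpha)>n$, $p(n-\alpha)=n$ and $p(n-\alpha)<n$, the asymptotics $v(x)\simeq|x|^{\alpha-n}$, $v(x)\simeq|x|^{\alpha-n}\ln|x|$ and $v(x)\simeq|x|^{(\alpha-n)(p+1)+n}$. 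Hence (2) holds.
\end{proof}
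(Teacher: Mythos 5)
Your proof is correct, but it takes a genuinely different route from the paper's. You establish the equivalence by chaining through item (1): for (2)$\Rightarrow$(3) you invoke Proposition \ref{prop4.6} and then Theorem \ref{th2.2}, and for (3)$\Rightarrow$(2) you invoke Theorem \ref{th2.3}, Theorem \ref{th3.1}, and Propositions \ref{prop4.1}--\ref{prop4.5}. That chain is logically sound (in particular you correctly note that the asymptotics of Propositions \ref{prop4.2}--\ref{prop4.5} are available only after integrability is secured), but it is precisely the composition of the equivalences (1)$\Leftrightarrow$(2) and (1)$\Leftrightarrow$(3) that the paper records in the paragraph immediately preceding the proposition; the word ``directly'' signals that Proposition \ref{prop4.7} is meant to give an independent argument that does \emph{not} pass through the integrable-solution characterization. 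The paper's actual proof does this as follows: for (3)$\Rightarrow$(2) it cites Proposition \ref{prop1.1} to get the critical condition, then the optimal integrability of finite energy solutions from \cite{JL}, boundedness from \cite{CL2}, and the fast decay rates from \cite{LLM-CV} (or \cite{SL}); for (2)$\Rightarrow$(3) it computes $\int u^{p+1}$ and $\int v^{q+1}$ directly from the decay rates, using the elementary consequences $n<(p+1)(n-\alpha)$, $n<(q+1)(n-\alpha)$ and $\frac{1}{q+1}<\frac{pn-(p+1)\alpha}{n}$ of (\ref{ncc}) to handle the three cases for $v$. Your version buys economy (a pure citation chase with no new computation), while the paper's version buys a self-contained direct verification of (2)$\Rightarrow$(3) and an external-literature route for (3)$\Rightarrow$(2) that does not depend on Theorems \ref{th2.2}--\ref{th2.3} or Proposition \ref{prop4.6}. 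Both are acceptable proofs of the stated equivalence.
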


\begin{proof}
(3)$\Rightarrow$(2): First, according to Proposition \ref{prop1.1},
we see the critical condition (\ref{cc}) holds. By Theorems 1.1 and 1.3 in \cite{JL},
we also obtain the optimal integrability of the finite energy
solutions $u,v$. Based on this result, \cite{CL2} proved the
boundedness of $u$ and $v$. In addition, Theorem 2 in \cite{LLM-CV}
shows the fast decay rates of $u,v$ as Theorem \ref{th1.2} (see also
Corollary 1.3 (2) in \cite{SL}).

(2)$\Rightarrow$(3): Eq. (\ref{ncc}) leads to $n<(p+1)(n-\alpha)$. Hence,
from the boundedness and the fast decay rate of $u$, we have
$$
\int_{R^n}u^{p+1}(x)dx \leq C+C\int_R^\infty r^{n-(p+1)(n-\alpha)}\frac{dr}{r}
<\infty.
$$

Similarly, (\ref{ncc}) also leads to $n<(q+1)(n-\alpha)$. We also deduce
that $v \in L^{q+1}(R^n)$ when $p(n-\alpha) \geq n$.

Eq. (\ref{ncc}) implies
$$
\alpha<(1-\frac{1}{p+1}-\frac{1}{q+1})n+\frac{pn}{(p+1)(q+1)}.
$$
Multiplying by $(p+1)(q+1)$ yields
$$
\frac{1}{q+1}<\frac{pn-(p+1)\alpha}{n}.
$$
Thus,
$$
\int_{R^n} v^{q+1}(x)dx \leq C+C\int_R^\infty r^{n-(q+1)(pn-(p+1)\alpha)}
\frac{dr}{r}<\infty.
$$
So, $v \in L^{q+1}(R^n)$ when $p(n-\alpha)<n$.
\end{proof}

\section{Slow decay of bounded solutions}

\begin{proposition} \label{prop5.1}
Let $u,v$ be positive bounded solutions. Then there exists $c>0$
such that as $|x| \to \infty$,
\begin{equation} \label{ulb}
u(x) \geq \frac{c}{(1+|x|)^{n-\alpha}};
\end{equation}
\begin{equation} \label{vlb}
v(x) \geq \frac{c}{(1+|x|)^{\min\{n-\alpha,pn-(p+1)\alpha\}}}.
\end{equation}
\end{proposition}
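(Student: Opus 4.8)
The plan is to bootstrap the two integral equations directly, using only the positivity and boundedness of $u,v$, so that no regularity or global integrability is needed. First I would establish (\ref{ulb}). Since $v>0$ and $v$ is bounded, the constant $a:=\int_{B_1(0)}v^q(y)\,dy$ is finite and strictly positive. For every $x\in R^n$ and every $y\in B_1(0)$ one has $|x-y|\le 1+|x|$, whence
\[
u(x)=\int_{R^n}\frac{v^q(y)\,dy}{|x-y|^{n-\alpha}}\ge\int_{B_1(0)}\frac{v^q(y)\,dy}{|x-y|^{n-\alpha}}\ge\frac{a}{(1+|x|)^{n-\alpha}},
\]
which is (\ref{ulb}) with $c=a$, valid on all of $R^n$ (in particular $n-\alpha>0$ since $\alpha\in(0,n)$).

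Next I would feed this bound into the second equation of (\ref{hls}). Using $u(y)\ge a(1+|y|)^{-(n-\alpha)}$,
\[
v(x)=\int_{R^n}\frac{u^p(y)\,dy}{|x-y|^{n-\alpha}}\ge a^p\int_{R^n}\frac{dy}{|x-y|^{n-\alpha}(1+|y|)^{p(n-\alpha)}},
\]
and I would restrict the integral to $\{|y|\le|x|/2\}$, where $|x-y|\le\frac32|x|\le\frac32(1+|x|)$, to obtain
\[
v(x)\ge\frac{C\,a^p}{(1+|x|)^{n-\alpha}}\,I(|x|),\qquad I(R):=\int_{\{|y|\le R/2\}}\frac{dy}{(1+|y|)^{p(n-\alpha)}}.
\]
It then remains to estimate $I(|x|)$ from below as $|x|\to\infty$, which splits into three cases according to the sign of $p(n-\alpha)-n$. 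If $p(n-\alpha)>n$, then $I(|x|)$ increases to the finite positive constant $\int_{R^n}(1+|y|)^{-p(n-\alpha)}dy$, so $I(|x|)\ge c>0$ for large $|x|$. If $p(n-\alpha)=n$, then passing to polar coordinates gives $I(|x|)\ge c\ln|x|$ for large $|x|$. If $p(n-\alpha)<n$, then integrating $r^{\,n-1-p(n-\alpha)}$ over $r\in(|x|/4,|x|/2)$ (here $n-p(n-\alpha)>0$) gives $I(|x|)\ge c|x|^{\,n-p(n-\alpha)}$. Substituting back: in the first two cases $v(x)\ge c(1+|x|)^{-(n-\alpha)}$, and there $n-\alpha=\min\{n-\alpha,\,pn-(p+1)\alpha\}$ because $pn-(p+1)\alpha=p(n-\alpha)-\alpha\ge n-\alpha$; in the third case $v(x)\ge c(1+|x|)^{-(n-\alpha)+n-p(n-\alpha)}=c(1+|x|)^{-(pn-(p+1)\alpha)}$, and there $pn-(p+1)\alpha<n-\alpha$, so again the exponent is $\min\{n-\alpha,\,pn-(p+1)\alpha\}$. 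This gives (\ref{vlb}) for $|x|$ large, which is all that is asserted.

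The argument is essentially routine potential theory, and I do not expect a genuine obstacle. The only points needing a little care are that positivity together with boundedness already yield the global pointwise lower bound on $u$, with no appeal to continuity or to $v^q\in L^1(R^n)$, and the bookkeeping in the three-case split, where one must verify that the resulting exponent is exactly $\min\{n-\alpha,\,pn-(p+1)\alpha\}$ in each regime, including the borderline $p(n-\alpha)=n$, where the logarithmic gain only strengthens the estimate. Note also that the hypothesis $p\le q$ and the condition (\ref{ncc}) are not used in this proposition.
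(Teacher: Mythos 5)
Your argument is correct and is essentially the paper's: a fixed ball near the origin gives the universal lower bound $u(x)\geq c(1+|x|)^{\alpha-n}$, which is then fed into the equation for $v$ over a region of radius $|x|/2$. The only cosmetic differences are that you integrate over $B(0,|x|/2)$ with a three-case estimate of $I(|x|)$, whereas the paper integrates over $B(x,|x|/2)$ to get the exponent $(p+1)\alpha-pn$ and combines it with the directly obtained bound $v(x)\geq c(1+|x|)^{\alpha-n}$; both yield the stated minimum.
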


\begin{proof}
First, we can find $c>0$ such that $u(y),v(y) \geq c>0$ for $y
\in B_1(0)$. Therefore,
$$
u(x) \geq c\int_{B_1(0)}\frac{dy}{|x-y|^{n-\alpha}} \geq c(1+|x|)^{\alpha-n}.
$$
This is (\ref{ulb}). Similarly, we also have
\begin{equation} \label{vlb1}
v(x) \geq \frac{c}{(1+|x|)^{n-\alpha}}.
\end{equation}
Substituting (\ref{ulb}) into $v(x) \geq \int_{B(x,|x|/2)}u^p(y)|x-y|^{\alpha-n}dy$
yields
$$
v(x) \geq c(1+|x|)^{p(\alpha-n)}\int_0^{|x|/2}r^\alpha \frac{dr}{r}
=c(1+|x|)^{(p+1)\alpha-pn}.
$$
Combining with (\ref{vlb1}), we obtain (\ref{vlb}).
\end{proof}

Theorem \ref{th1.2} shows that $u,v$ decay by the fast rates as long as
they are the integrable solutions. If $u,v$ are not integrable, we conjecture that they
decay slowly.

The following result shows that the decay rates of $u,v$ are not faster
than the slow rates $\frac{\alpha(q+1)}{pq-1}$ and $\frac{\alpha(p+1)}{pq-1}$
respectively, if $u,v$ are not integrable.

\begin{proposition} \label{prop5.2}
Let $u,v$ be positive bounded solutions, and
$\theta_3>\frac{\alpha(q+1)}{pq-1}$, $\theta_4>\frac{\alpha(p+1)}{pq-1}$.
If $u,v$ are not the integrable solutions, then there does not exist $C>0$
such that either
$$
u(x) \leq C(1+|x|)^{-\theta_3},
\quad or \quad
v(x) \leq C(1+|x|)^{-\theta_4}.
$$
\end{proposition}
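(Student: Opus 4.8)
The plan is to argue by contradiction, exploiting the elementary fact that a \emph{bounded} solution which decays faster than its slow rate is automatically integrable. By the symmetry recorded in Remark 1.2 (exchange $u\leftrightarrow v$, $p\leftrightarrow q$, $r_0\leftrightarrow s_0$) it is enough to rule out the first alternative, so suppose that $u(x)\le C(1+|x|)^{-\theta_3}$ for some $C>0$ with $\theta_3>\frac{\alpha(q+1)}{pq-1}=n/r_0$. First I would observe that, since $u$ is bounded, splitting $R^n$ into $B_1(0)$ and its complement gives
$$\int_{R^n}u^{r_0}(x)\,dx\le C+C\int_1^{\infty}r^{\,n-1-r_0\theta_3}\,dr<\infty,$$
the integral converging because $r_0\theta_3>n$; hence $u\in L^{r_0}(R^n)$.

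The second step is to upgrade this to $v\in L^{s_0}(R^n)$ via the second equation $v(x)=\int_{R^n}|x-y|^{\alpha-n}u^p(y)\,dy$. Since $u\in L^{r_0}(R^n)$ we have $u^p\in L^{r_0/p}(R^n)$, and I would apply the HLS inequality in the form $\|v\|_{s_0}\le C\|u^p\|_{\frac{ns_0}{n+s_0\alpha}}$; this requires checking that $\frac{1}{s_0}\in(0,\frac{n-\alpha}{n})$ and that the exponents match, i.e. $\frac{ns_0}{n+s_0\alpha}=\frac{r_0}{p}$. The exponent match is just the algebraic identity $\frac{1}{s_0}=\frac{p}{r_0}-\frac{\alpha}{n}$, immediate from the definitions of $r_0,s_0$. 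For the admissibility one uses the non-subcritical condition (\ref{ncc}): rewriting it as $n(pq-1)\ge\alpha(p+1)(q+1)$, exactly as in the proof of Theorem \ref{th2.2}, gives $s_0=\frac{n(pq-1)}{\alpha(p+1)}\ge q+1$, while $q+1>\frac{n}{n-\alpha}$ again by (\ref{ncc}) (since $\frac{1}{q+1}\le\frac{n-\alpha}{n}-\frac{1}{p+1}<\frac{n-\alpha}{n}$), so $\frac{1}{s_0}<\frac{n-\alpha}{n}$. Then the HLS inequality yields $\|v\|_{s_0}\le C\|u\|_{r_0}^p<\infty$.

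Combining the two steps, $(u,v)\in L^{r_0}(R^n)\times L^{s_0}(R^n)$, i.e. $u,v$ are integrable solutions, contradicting the hypothesis; this disposes of the first alternative, and the second ($v(x)\le C(1+|x|)^{-\theta_4}$ with $\theta_4>\frac{\alpha(p+1)}{pq-1}=n/s_0$) follows by the symmetric argument --- first $v\in L^{s_0}(R^n)$, then HLS applied to the first equation, using $\frac{1}{r_0}=\frac{q}{s_0}-\frac{\alpha}{n}$ and $r_0=\frac{n(pq-1)}{\alpha(q+1)}\ge p+1>\frac{n}{n-\alpha}$, gives $u\in L^{r_0}(R^n)$, again a contradiction. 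The only point requiring any care is the bookkeeping that the conjugate exponents $r_0/p$ and $s_0/q$ lie inside the admissible HLS range, which is precisely where non-subcriticality enters (through $r_0\ge p+1$ and $s_0\ge q+1$); an alternative route that avoids citing HLS here would be to bootstrap a pointwise decay bound for $v$ from that of $u$ through the Riesz potential --- handling the borderline case $p\theta_3=n$ by an arbitrarily small loss in the exponent --- and then conclude integrability from the resulting rate, which one checks beats $n/s_0$.
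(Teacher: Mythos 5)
Your proof is correct and its core step is exactly the paper's: a bounded solution decaying faster than $n/r_0$ (resp.\ $n/s_0$) lies in $L^{r_0}$ (resp.\ $L^{s_0}$) by splitting the integral at a fixed ball, contradicting non-integrability. Where you go beyond the paper is the HLS bootstrap ($u\in L^{r_0}\Rightarrow v\in L^{s_0}$ via the identity $\frac{1}{s_0}=\frac{p}{r_0}-\frac{\alpha}{n}$, and symmetrically $\frac{1}{r_0}=\frac{q}{s_0}-\frac{\alpha}{n}$): the paper stops after showing that $u\in L^{r_0}$ (or $v\in L^{s_0}$) and declares a contradiction, which only literally contradicts ``$(u,v)\notin L^{r_0}(R^n)\times L^{s_0}(R^n)$'' once the other component is also known to be integrable. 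So your extra step closes a small gap in the published argument, and your admissibility checks ($\frac{1}{r_0},\frac{1}{s_0}\in(0,\frac{n-\alpha}{n})$ from (\ref{ncc}) with $p\leq q$) are the right ones.
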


\begin{proof}
Suppose there exists $C>0$ such that as $|x| \to \infty$,
$$
u(x) \leq C(1+|x|)^{-\theta_3},
$$
where $\theta_3>\frac{\alpha(q+1)}{pq-1}$,
then
$$\begin{array}{ll}
\displaystyle\int_{R^n} u^{r_0}(x)dx=&\displaystyle\int_{B_R(0)}u^{r_0}(x)dx
+\int_{R^n \setminus B_R(0)} u^{r_0}(x)dx\\[3mm]
&\leq C+C\displaystyle\int_R^\infty r^{n-r_0\theta_3} \frac{dr}{r}
< \infty. \end{array}
$$

Similarly, if $v(x) \leq C(1+|x|)^{-\theta_4}$
with $\theta_4>\frac{\alpha(p+1)}{pq-1}$,
then it also belongs to $L^{s_0}(R^n)$.

Thus, $u$ (or $v$) is integrable solution, which
contradicts with the assumption of our proposition.
\end{proof}

The following result shows that the decay rates of $u,v$ are not slower
than the slow rates $\frac{\alpha(q+1)}{pq-1}$ and $\frac{\alpha(p+1)}{pq-1}$,
respectively.

\begin{proposition} \label{prop5.3}
Let $u,v$ be positive bounded solutions of (\ref{hls}),
and $\theta_1<\frac{\alpha(q+1)}{pq-1}$, $\theta_2<\frac{\alpha(p+1)}{pq-1}$.
Then there does not exist $C>0$
such that either
$$
u(x) \geq C(1+|x|)^{-\theta_1}, \quad or \quad v(x) \geq C(1+|x|)^{-\theta_2}.
$$
\end{proposition}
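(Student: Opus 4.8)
The plan is a bootstrap argument driven by the coupling in (\ref{hls}) together with the hypothesis that $u,v$ are bounded. Since (\ref{hls}) is unchanged under the interchange $u\leftrightarrow v$, $p\leftrightarrow q$ (which sends $\tfrac{\alpha(q+1)}{pq-1}$ to $\tfrac{\alpha(p+1)}{pq-1}$), it suffices to rule out the lower bound for $u$. So suppose, towards a contradiction, that there is $C>0$ with $u(x)\ge C(1+|x|)^{-\theta_1}$ for all large $|x|$, where $\theta_1<\tfrac{\alpha(q+1)}{pq-1}$. Since $u$ is positive and continuous, hence bounded below by a positive constant on every ball, I may (after decreasing $C$) assume the bound holds on all of $R^n$; and since the conclusion for a given exponent yields it for every smaller exponent, I may also assume $0<\theta_1<\tfrac{\alpha(q+1)}{pq-1}$.

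The core is one iteration step. If $u(y)\ge c(1+|y|)^{-b}$ on $R^n$ with $0<b<\tfrac{\alpha(q+1)}{pq-1}$, then restricting the integral defining $v$ to the ball $B(x,|x|/2)$ — on which $1+|y|\asymp 1+|x|$ and $\int_{B(x,|x|/2)}|x-y|^{\alpha-n}\,dy\asymp|x|^{\alpha}$ — gives, for $|x|$ large,
$$
v(x)=\int_{R^n}\frac{u^p(y)}{|x-y|^{n-\alpha}}\,dy\ \ge\ c_1\,(1+|x|)^{\alpha-pb}.
$$
If $pb<\alpha$ this makes $v$ unbounded, a contradiction; otherwise I insert $v(x)\ge c_1(1+|x|)^{-(pb-\alpha)}$ into the integral defining $u$ in the same way and obtain
$$
u(x)\ \ge\ c_2\,(1+|x|)^{\alpha-q(pb-\alpha)}=c_2\,(1+|x|)^{-\Phi(b)},\qquad \Phi(b):=pq\,b-(q+1)\alpha .
$$
If $\Phi(b)<0$ the boundedness of $u$ is contradicted; otherwise the decay exponent has been upgraded from $b$ to $\Phi(b)$, and $\Phi(b)<\tfrac{\alpha(q+1)}{pq-1}$ precisely when $b<\tfrac{\alpha(q+1)}{pq-1}$, so the step can be repeated.

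Now iterate from $b_0=\theta_1$. Writing $\beta:=\tfrac{\alpha(q+1)}{pq-1}$, one verifies the exact identity $\Phi(b)-\beta=pq\,(b-\beta)$, whence $b_k-\beta=(pq)^k(\theta_1-\beta)$. Since $pq>1$ and $\theta_1<\beta$, the exponents $b_k$ decrease to $-\infty$, so after finitely many steps the process above reaches a stage with a nonpositive exponent; the corresponding lower bound then makes $u$ or $v$ unbounded, contradicting the hypothesis. Hence no such $C$ exists, and the symmetric argument applied to the swapped system rules out $v(x)\ge C(1+|x|)^{-\theta_2}$ when $\theta_2<\tfrac{\alpha(p+1)}{pq-1}$.

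The only points needing care are routine: the comparisons $1+|y|\asymp 1+|x|$ and $\int_{B(x,|x|/2)}|x-y|^{\alpha-n}\,dy\asymp|x|^{\alpha}$ on the half-radius ball, and the bookkeeping that the iteration runs only a fixed finite number of times, so the accumulating constants $c_k$ and the thresholds on $|x|$ stay harmless. I do not expect a genuine obstacle: the map $\Phi$ is linear with a repelling fixed point exactly at the slow rate $\beta$, which is what forces the exponent past $0$ in boundedly many steps.
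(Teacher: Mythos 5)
Your proposal is correct and follows essentially the same route as the paper: the same restriction of the integrals to $B(x,|x|/2)$, the same linear iteration $b\mapsto pqb-(q+1)\alpha$ with repelling fixed point at the slow rate, and the same contradiction with boundedness once the exponent turns nonpositive. The extra care you take (extending the lower bound to all of $R^n$, and stopping early if $pb<\alpha$ already makes $v$ unbounded) only tidies up details the paper leaves implicit.
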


\begin{proof}
If there exists $C>0$ such that for large $|x|$,
$$
u(x) \geq C(1+|x|)^{-\theta_1}, \quad \theta_1<\frac{\alpha(q+1)}{pq-1}.
$$
By an iteration we can deduce the contradiction.

Denoting $\theta_1$ by $b_0$, we have
$$
v(x) \geq \int_{B(x,|x|/2)}\frac{u^p(y)dy}{|x-y|^{n-\alpha}}
\geq c(1+|x|)^{-a_1}, \quad a_1=pb_0-\alpha.
$$
Using this result, we have
$$
u(x) \geq \int_{B(x,|x|/2)}\frac{v^q(y)dy}{|x-y|^{n-\alpha}}
\geq c(1+|x|)^{-b_1}, \quad b_1=qa_1-\alpha.
$$
By induction, we have two sequences
$$
b_0=\theta_1, \quad a_j=pb_{j-1}-\alpha,\quad
b_j=qa_j-\alpha, \quad j=1,2,\cdots.
$$
We claim that there must be $j_0$ such that $b_{j_0}<0$, which leads to
$u(x)=\infty$ for large $|x|$. In fact,
$$
b_j=pqb_{j-1}-\alpha(q+1)=\cdots=
(pq)^jb_0-\alpha(q+1)(1+pq+\cdots+(pq)^{j-1}).
$$
In view of $pq>1$, we have
$$
b_j=(pq)^j(b_0-\frac{\alpha(q+1)}{pq-1})+\frac{\alpha(q+1)}{pq-1}.
$$
Noting $b_0-\frac{\alpha(q+1)}{pq-1}<0$, we can find a large $j_0$ such
that $b_{j_0}<0$. It is impossible since the solution $u$ blows up.

Similarly, if there exists $C>0$ such that for large $|x|$,
$$
v(x) \geq C(1+|x|)^{-\theta_2}, \quad \theta_2<\frac{\alpha(p+1)}{pq-1}.
$$
By an analogous iteration argument above, we can also deduce a contradiction.
\end{proof}

Combining Propositions \ref{prop5.2} and \ref{prop5.3}, we complete
the proof of Theorem \ref{th1.3}.

\paragraph{Remark 5.1.}
Proposition \ref{prop5.3} shows that if there exists $C>0$ such that
$$
u(x) \leq C(1+|x|)^{-\theta_1}; \quad v(x) \leq C(1+|x|)^{-\theta_2},
$$
then $\theta_1 \geq \frac{\alpha(q+1)}{pq-1}$,
$\theta_2 \geq \frac{\alpha(p+1)}{pq-1}$.
However, there may be $C_j \to \infty$ such that as some $|x_j| \to \infty$,
$$
u(x_j) \leq C_j(1+|x_j|)^{-\theta_1}; \quad v(x_j) \leq C_j(1+|x_j|)^{-\theta_2}.
$$

\vskip 3mm

If $u,v$ have some monotonicity, then the result above does not happen.

\begin{proposition} \label{prop5.4}
Let $u,v$ be positive bounded decaying solutions. If there exists some $\epsilon_0>0$
such that for $|y| \leq |x|$,
$$
u(y) \geq \epsilon_0 u(x), \quad or \quad v(y) \geq \epsilon_0 v(x),
$$
then there exists $C>0$ such that
$$
u(x) \leq C(1+|x|)^{-\frac{\alpha(q+1)}{pq-1}}, \quad
v(x) \leq C(1+|x|)^{-\frac{\alpha(p+1)}{pq-1}}.
$$
\end{proposition}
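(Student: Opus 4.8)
The plan is to establish the bound on $u$ first, by a self-improving ``propagation'' estimate that exploits the monotonicity, and then to read off the bound on $v$ directly from the integral equation. Since the hypothesis and the desired conclusion are symmetric under interchanging the roles of $(u,p)$ and $(v,q)$ (with $\theta_1:=\frac{\alpha(q+1)}{pq-1}$ and $\theta_2:=\frac{\alpha(p+1)}{pq-1}$), I may assume throughout that $u(y)\ge\epsilon_0 u(x)$ whenever $|y|\le|x|$. Write $M:=\|u\|_\infty+\|v\|_\infty$. I will repeatedly use the algebraic identities $p\theta_1-\alpha=\theta_2$ and $q\theta_2-\alpha=\theta_1$ (both equivalent to the definitions of $\theta_1,\theta_2$), the inequality $\theta_2<n-\alpha$ (Theorem 1.5 (2) in \cite{LeiLi}), and the elementary identity $\int_{B_\rho(x)}|x-y|^{\alpha-n}\,dy=\frac{|S^{n-1}|}{\alpha}\rho^\alpha$.

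The key step is a propagation lemma: there is $c_0>0$, depending only on $n,\alpha,p,q,\epsilon_0$, such that if $x_0\ne 0$ and $u(x_0)\ge\Lambda|x_0|^{-\theta_1}$ for some $\Lambda>0$, then
$$
u(x)\ge c_0\Lambda^{pq}|x_0|^{-\theta_1}\qquad\text{for all }|x|\le|x_0|/4.
$$
Indeed, the monotonicity upgrades the bound at $x_0$ to $u(y)\ge\epsilon_0\Lambda|x_0|^{-\theta_1}$ on the whole ball $B_{|x_0|}(0)$. For $|x|\le|x_0|/2$ one has $B_{|x_0|/2}(x)\subset B_{|x_0|}(0)$, so the equation for $v$ gives $v(x)\ge(\epsilon_0\Lambda)^p|x_0|^{-p\theta_1}\cdot\frac{|S^{n-1}|}{\alpha}(|x_0|/2)^\alpha=c_1\Lambda^p|x_0|^{-\theta_2}$, using $\alpha-p\theta_1=-\theta_2$. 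For $|x|\le|x_0|/4$ one has $B_{|x_0|/4}(x)\subset B_{|x_0|/2}(0)$, so the equation for $u$ then gives $u(x)\ge(c_1\Lambda^p)^q|x_0|^{-q\theta_2}\cdot\frac{|S^{n-1}|}{\alpha}(|x_0|/4)^\alpha=c_0\Lambda^{pq}|x_0|^{-\theta_1}$, using $\alpha-q\theta_2=-\theta_1$.

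Now set $\kappa:=c_0 4^{-\theta_1}$ and $\Lambda_*:=2\kappa^{-1/(pq-1)}$, and suppose, for contradiction, that $u(x_0)>\Lambda_*|x_0|^{-\theta_1}$ for some $x_0\ne 0$; put $\Lambda_0:=u(x_0)|x_0|^{\theta_1}>\Lambda_*$. Applying the lemma at $x_0$ and choosing any $x_1$ with $|x_1|=|x_0|/4$ yields $\Lambda_1:=u(x_1)|x_1|^{\theta_1}\ge c_0\Lambda_0^{pq}(|x_1|/|x_0|)^{\theta_1}=\kappa\Lambda_0^{pq}$; iterating produces points $x_k$ with $|x_k|=|x_0|/4^k$ and $\Lambda_{k+1}\ge\kappa\Lambda_k^{pq}$. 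Writing $\mu_k:=\kappa^{1/(pq-1)}\Lambda_k$ one gets $\mu_{k+1}\ge\mu_k^{pq}$, hence $\mu_k\ge\mu_0^{(pq)^k}$; since $\mu_0>2$ this forces $\Lambda_k\ge\kappa^{-1/(pq-1)}2^{(pq)^k}\to\infty$. On the other hand $\Lambda_k=u(x_k)|x_k|^{\theta_1}\le M(|x_0|/4^k)^{\theta_1}\to 0$, a contradiction. Therefore $u(x)\le\Lambda_*|x|^{-\theta_1}$ for all $x\ne 0$, and combining with $u\le M$ gives $u(x)\le C(1+|x|)^{-\theta_1}$ on $R^n$. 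Inserting this into $v(x)=\int_{R^n}u^p(y)|x-y|^{\alpha-n}\,dy\le C^p\int_{R^n}(1+|y|)^{-p\theta_1}|x-y|^{\alpha-n}\,dy$ and invoking the standard Riesz-potential estimate (split over $B_{|x|/2}(0)$, $B_{|x|/2}(x)$, and the complement, as in the proofs of Propositions \ref{prop4.2}--\ref{prop4.5}), together with $p\theta_1-\alpha=\theta_2\in(0,n-\alpha)$, gives $v(x)\le C(1+|x|)^{-\theta_2}$ (with an extra $\ln(2+|x|)$ in the borderline case $p\theta_1=n$, still dominated by $(1+|x|)^{-\theta_2}$ since $n-\alpha>\theta_2$).

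I expect the main obstacle to be the propagation lemma: the monotonicity is essential for turning the single pointwise inequality $u(x_0)\ge\Lambda|x_0|^{-\theta_1}$ into a bound over all of $B_{|x_0|}(0)$, which is what lets the two integral-equation steps compose and the exponents close up --- the latter happening precisely because $\theta_1=\frac{\alpha(q+1)}{pq-1}$ is the critical (slow) rate, so that applying the lemma does not improve the exponent but only amplifies the constant. Once the lemma is in hand, the contradiction is soft: $\Lambda_k$ is forced to blow up (doubly exponentially in $k$) while simultaneously being squeezed to $0$ by boundedness of $u$ along a sequence of radii shrinking geometrically to $0$. The verification in the last display is routine Riesz-potential bookkeeping and needs no new idea.
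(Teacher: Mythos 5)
Your proof is correct, but it takes a genuinely different and considerably longer route than the paper's. The paper's argument is essentially two lines: the quasi-monotonicity hypothesis gives the pointwise inequalities $u(x)\ge c\,v^q(x)|x|^\alpha$ and $v(x)\ge c\,u^p(x)|x|^\alpha$ (restrict the integral defining $u(x)$ to the region $\{|y|\le|x|\}$, where $v(y)\ge\epsilon_0 v(x)$ and $|x-y|\le 2|x|$), and composing these yields $v(x)\ge c\,v^{pq}(x)|x|^{(p+1)\alpha}$; since $v>0$ and $pq>1$ one simply divides by $v^{pq}(x)$ to read off $v(x)\le C|x|^{-\alpha(p+1)/(pq-1)}$, and symmetrically for $u$. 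You replace this self-improving pointwise inequality by a propagation lemma plus a doubly-exponential iteration toward the origin, reaching a contradiction with boundedness, and then recover the bound on $v$ by a Riesz-potential estimate. Both arguments are sound. What your version buys: it uses the quasi-monotonicity of only one of the two functions, so it matches the ``or'' in the hypothesis literally, whereas the paper's computation needs both and has to assert (rather tersely) that monotonicity of $u$ implies that of $v$; moreover your derivation of the $v$-bound needs no monotonicity of $v$ at all. What it costs: your iteration marches through the scales $|x_0|/4^k\to 0$, so you implicitly invoke the hypothesis at every scale (fine as the proposition is stated, but the paper's pointwise argument uses it only at the single point under consideration), and the whole argument is an order of magnitude longer. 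Two small remarks: the borderline case $p\theta_1=n$ that you guard against in the last step cannot actually occur, since $p\theta_1=\theta_2+\alpha<n$ follows from $\theta_2<n-\alpha$; and your propagation-plus-blow-up scheme is close in spirit to the paper's own proof of Proposition \ref{prop5.3}, except that there the iteration is run on the exponent rather than on the constant.
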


\begin{proof}
Clearly,
\begin{equation} \label{ulb2}
u(x) \geq cv^q(x) \int_R^{|x|}r^\alpha \frac{dr}{r}
\geq cv^q(x)|x|^\alpha, \quad for ~lagre ~|x|.
\end{equation}
By the monotonicity of $u$ we also deduce the monotonicity of $v$.
Thus, we also have
$$
v(x) \geq cu^p(x)|x|^\alpha.
$$
Inserting this result into (\ref{ulb2}), we get
$$
v(x) \geq cv^{pq}(x)|x|^{(p+1)\alpha},
$$
which implies the estimate of $v$. Similarly, $u$ also has the
corresponding estimate.
\end{proof}

Combining Propositions \ref{prop5.1} and \ref{prop5.4},
we complete the proof of Theorem \ref{th1.1}.

Next, we prove Theorem \ref{th1.4}. It is the corollary of the following
proposition.

\begin{proposition} \label{prop5.5}
Suppose that the positive bounded solutions $u,v$ satisfy
\begin{equation} \label{assum}
u(x) \simeq (1+|x|)^{-\theta_1}, v(x) \simeq (1+|x|)^{-\theta_2},
\quad when ~|x| \to \infty.
\end{equation}
Then (\ref{ncc}) must hold, and
\begin{equation} \label{jkl}
\theta_1 \geq \frac{\alpha(q+1)}{pq-1}, \quad
\theta_2 \geq \frac{\alpha(p+1)}{pq-1}.
\end{equation}
Furthermore,

(1) if one strict inequality of (\ref{jkl}) holds, then (\ref{cc}) must be true
and $u,v$ are the finite energy solutions decaying fast like Theorem \ref{th1.2}.

(2) If $u,v$ are not the integrable solutions, then
$$
\theta_1=\frac{\alpha(q+1)}{pq-1}, \quad
\theta_2=\frac{\alpha(p+1)}{pq-1}.
$$
\end{proposition}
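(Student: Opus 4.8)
The plan is to extract from the asymptotic equivalence (\ref{assum}) precise power-law bounds on the two defining integrals, run the iteration machinery of Proposition \ref{prop5.3} against the exact-rate hypothesis, and then invoke the integrability/finite-energy dictionary of Section 2 together with Proposition \ref{prop1.1}. First I would record what (\ref{assum}) gives directly: since $u,v$ are bounded and $u(x)\simeq(1+|x|)^{-\theta_1}$, $v(x)\simeq(1+|x|)^{-\theta_2}$ with $\theta_1,\theta_2>0$, the functions $u^{r_0},v^{s_0},u^{p+1},v^{q+1}$ are all controlled by powers of $|x|$ at infinity, so membership in the relevant $L^p$ spaces is decided purely by whether the corresponding exponent $n-\theta\cdot(\text{power})$ is negative. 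In particular the monotone-type lower bound computations in Proposition \ref{prop5.4}, which only used $u(x)\ge cv^q(x)|x|^\alpha$ and its symmetric partner, go through verbatim here because the polynomial two-sided control plays the role of monotonicity: from $u(x)\ge c\int_{B(x,|x|/2)}v^q(y)|x-y|^{\alpha-n}dy$ and $v(y)\simeq|y|^{-\theta_2}$ on that ball one gets $u(x)\ge c|x|^{\alpha-q\theta_2}$, hence (comparing with $u(x)\le C|x|^{-\theta_1}$) the inequality $\theta_1\le q\theta_2-\alpha$, and symmetrically $\theta_2\le p\theta_1-\alpha$. Chaining these two gives $\theta_1\le pq\,\theta_1-\alpha(q+1)$, i.e. $\theta_1\ge\frac{\alpha(q+1)}{pq-1}$ and likewise $\theta_2\ge\frac{\alpha(p+1)}{pq-1}$; this is (\ref{jkl}). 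Note $\alpha-q\theta_2$ and $\alpha-p\theta_1$ must actually be negative for $u,v$ to be finite, which already forces $\theta_i>\alpha/\!\cdot$, but the iterated bound is the sharp one.

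Next I would derive that (\ref{ncc}) must hold. Suppose the subcritical inequality (\ref{succ}) held instead; I would use the \emph{upper} bounds $u(x)\le C|x|^{-\theta_1}$, $v(x)\le C|x|^{-\theta_2}$ to plug into the integral equations and read off a convergence constraint. Concretely, evaluating $u(x)=\int v^q(y)|x-y|^{\alpha-n}dy$ near a large $|x|$: the far-field part contributes like $\int_{|y|\ge 1}|y|^{-q\theta_2}|x-y|^{\alpha-n}dy$, which for $u$ to decay no faster than $|x|^{\alpha-n}$ (it cannot, by Proposition \ref{prop5.1}, since $u(x)\ge c(1+|x|)^{\alpha-n}$) requires $q\theta_2\le n$ would make the integral diverge — so in fact one needs $q\theta_2>n$ for the tail to converge, and then the integral behaves like $|x|^{\alpha-n}$, forcing $\theta_1\le n-\alpha$; by symmetry (or the $p\le q$-type asymmetric version handled as in Theorem \ref{th1.1}) one also gets bounds on $\theta_2$. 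Combining $\theta_1\le n-\alpha$, $\theta_1\ge\frac{\alpha(q+1)}{pq-1}$ yields $\frac{\alpha(q+1)}{pq-1}\le n-\alpha$, which is exactly the non-subcritical condition (\ref{ncc}) rearranged (compare the computation around (\ref{slowfast})); the subcritical case (\ref{succ}) is thereby excluded. So (\ref{ncc}) holds.

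For part (1), suppose $\theta_1>\frac{\alpha(q+1)}{pq-1}$ (the case $\theta_2>\frac{\alpha(p+1)}{pq-1}$ is symmetric). Then, checking exponents, $n-r_0\theta_1<n-r_0\cdot\frac{\alpha(q+1)}{pq-1}=n-n=0$ — wait, more carefully: $r_0\cdot\frac{\alpha(q+1)}{pq-1}=\frac{n(pq-1)}{\alpha(q+1)}\cdot\frac{\alpha(q+1)}{pq-1}=n$, so $n-r_0\theta_1<0$ strictly, hence $\int_{R^n}u^{r_0}<\infty$. Once $u\in L^{r_0}$, I would feed this into the integral equation: $v=\int u^p|x-y|^{\alpha-n}dy$ with $u\in L^{r_0}$ bounded gives, by the HLS inequality and the interpolation of Theorem \ref{th3.1}-style arguments, that $v\in L^{s_0}$ as well — or more directly, one shows the full integrable-solution status follows (the argument is essentially Proposition \ref{prop4.6} read backwards, or one notes $v$ bounded and, from the equation, $v(x)\lesssim|x|^{\alpha-n}$ or the relevant fast rate so $v\in L^{s_0}$ too). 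Then $(u,v)$ is an integrable solution, Theorem \ref{th2.2} upgrades this to a finite energy solution, Proposition \ref{prop1.1} forces the critical condition (\ref{cc}), and Theorem \ref{th1.2} delivers the fast decay rates. For part (2), if $(u,v)$ is \emph{not} an integrable solution, then part (1) shows that neither strict inequality in (\ref{jkl}) can occur, so both are equalities: $\theta_1=\frac{\alpha(q+1)}{pq-1}$, $\theta_2=\frac{\alpha(p+1)}{pq-1}$, which is the slow decay.

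The main obstacle I anticipate is the careful bookkeeping in the exclusion of (\ref{succ}) and in the "one strict inequality $\Rightarrow$ integrable" step: one must be sure that the tail integrals $\int_{|y|\ge1}|y|^{-q\theta_2}|x-y|^{\alpha-n}dy$ and $\int_{|y|\ge1}|y|^{-p\theta_1}|x-y|^{\alpha-n}dy$ converge (this is where (\ref{ncc}) and the lower bounds on $\theta_i$ are genuinely used, not just invoked) and that the resulting asymptotics are consistent with the two-sided hypothesis (\ref{assum}) — in particular ruling out, via the lower bound $u(x)\ge c(1+|x|)^{\alpha-n}$ of Proposition \ref{prop5.1}, the scenario $\theta_1>n-\alpha$, which would otherwise leave room for the subcritical case. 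Everything else is the iteration of Proposition \ref{prop5.3} (now with both-sided control) and the Section 2 equivalences, which are routine once the exponent arithmetic is pinned down.
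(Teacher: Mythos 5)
Your derivation of (\ref{jkl}) (the ball estimate on $B(x,|x|/2)$ giving $\theta_1\le q\theta_2-\alpha$, $\theta_2\le p\theta_1-\alpha$, then chaining), your part (1) ($\theta_1$ strictly above the slow rate forces $u\in L^{r_0}$ since $r_0\cdot\frac{\alpha(q+1)}{pq-1}=n$, then $v\in L^{s_0}$ by HLS, then the Section~2 equivalences and Proposition \ref{prop1.1}), and your part (2) (contrapositive of part (1) / Proposition \ref{prop5.2}) all coincide with the paper's argument. The genuine gap is in your derivation that (\ref{ncc}) must hold. You argue that Proposition \ref{prop5.1} forces $\theta_1\le n-\alpha$, and that combining this with $\theta_1\ge\frac{\alpha(q+1)}{pq-1}$ gives $\frac{\alpha(q+1)}{pq-1}\le n-\alpha$, ``which is exactly (\ref{ncc}) rearranged.'' It is not: (\ref{ncc}) is equivalent to $(n-\alpha)pq\ge n+\alpha+\alpha p+\alpha q$, whereas $\frac{\alpha(q+1)}{pq-1}\le n-\alpha$ is equivalent to the strictly weaker $(n-\alpha)pq\ge n+\alpha q$ (and the symmetric bound on $\theta_2$ only adds $(n-\alpha)pq\ge n+\alpha p$). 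For instance, with $n=5$, $\alpha=2$, $p=q=2$ one has $\frac{\alpha(q+1)}{pq-1}=2\le 3=n-\alpha$ while $\frac{1}{p+1}+\frac{1}{q+1}=\frac{2}{3}>\frac{3}{5}=\frac{n-\alpha}{n}$, i.e.\ the subcritical case (\ref{succ}) is not excluded by your inequalities. (The identity $\frac{\alpha(q+1)}{pq-1}<n-\alpha$ is derived in the paper around (\ref{slowfast}) only under the critical condition, via $\frac{n}{p+1}<n-\alpha$, which is not available here.)

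The paper closes this step differently, and you have all the ingredients to do the same: under (\ref{succ}) one has $\frac{pq-1}{(p+1)(q+1)}<\frac{\alpha}{n}$, hence $\frac{\alpha(p+1)}{pq-1}>\frac{n}{q+1}$ and $\frac{\alpha(q+1)}{pq-1}>\frac{n}{p+1}$; combined with (\ref{jkl}) this gives $(q+1)\theta_2>n$ and $(p+1)\theta_1>n$, so the bounded solutions with the two-sided decay (\ref{assum}) satisfy $(u,v)\in L^{p+1}\times L^{q+1}$, i.e.\ they are finite energy solutions, contradicting Proposition \ref{prop1.1} (which forbids finite energy solutions unless (\ref{cc}) holds). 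You should replace your tail-integral discussion with this argument; the rest of your proposal then stands.
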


\begin{proof}
{\it Step 1.}
We first claim
$$
\theta_1 \geq \frac{\alpha(q+1)}{pq-1}, \quad
\theta_2 \geq \frac{\alpha(p+1)}{pq-1}.
$$
In fact, $|x|/2 \leq |y| \leq 3|x|/2$ when $y \in B_{|x|/2}(x)$.
Thus, for large $|x|$, from (\ref{assum}) it follows that
$$\begin{array}{ll}
C(1+|x|)^{-\theta_1}
&\geq u(x) \\[3mm]
&\geq \displaystyle\int_{B(x,|x|/2)} \frac{v^q(y)dy}{|x-y|^{n-\alpha}}\\[3mm]
&\geq c(1+|x|)^{-q\theta_2} \displaystyle\int_0^{|x|/2}
r^{\alpha} \frac{dr}{r}\\[3mm]
&\geq c(1+|x|)^{\alpha-q\theta_2}.
\end{array}
$$
This result implies
$$
\theta_1 \leq q\theta_2-\alpha
$$
since $|x|$ is sufficiently large. Similarly,
$$
\theta_2 \leq p\theta_1-\alpha.
$$
These two inequalities above show our claim.

{\it Step2.}
We claim that the subcritical condition (\ref{succ}) is not true. Otherwise,
$$
\theta_2 \geq \frac{\alpha(p+1)}{pq-1}=\frac{\alpha}{q+1}
(1-\frac{1}{p+1}-\frac{1}{q+1})^{-1}>\frac{n}{q+1},
$$
which implies that $v \in L^{q+1}(R^n)$ is a finite energy solution.
This contradicts with Proposition \ref{prop1.1}.

{\it Step 3.} We prove (1) and (2).

(1) Without loss of generality, we assume $\theta_1>\frac{\alpha(q+1)}{pq-1}$.
Then using Proposition \ref{prop5.2}, we know $u \in L^{r_0}(R^n)$ and hence
$u$ is the integrable solution.
By the HLS inequality, $v$ is also the integrable solution.
According to Theorem \ref{th1.2}, (\ref{cc}) is true, and $u,v$ are the finite
energy solutions.

(2) Using Proposition \ref{prop5.2}, from (\ref{assum}) and
(\ref{jkl}) we can see our conclusion.
\end{proof}

\paragraph{Acknowledgements.} The work of Y. Lei is partially supported by NSFC grant 11171158,
the Natural Science Foundation of Jiangsu (BK2012846) and SRF for ROCS, SEM.
The work of C. Li is partially supported by NSF grant DMS-0908097 and NSFC grant 11271166.

Yutian Lei

Institute of Mathematics, School of Mathematical Sciences,
Nanjing Normal University, Nanjing, 210023, China

Congming Li

Department of Applied Mathematical, University of Colorado at Boulder,
Boulder, CO 80309, USA

Department of Mathematics, and MOE-LSC, Shanghai Jiao Tong University,
Shanghai, 200240, China


\begin{thebibliography}{20}

\bibitem{CGS}
    L. Caffarelli, B. Gidas, J. Spruck,
    \emph{Asymptotic symmetry and local behavior of semilinear
    elliptic equations with critical Sobolev growth}, Comm. Pure
    Appl. Math. \textbf{42} (1989), 271--297.

\bibitem{CDM}
    G. Caristi, L. D'Ambrosio, E. Mitidieri,
    \emph{Representation formulae for solutions to some classes of
    higher order systems and related Liouville theorems}, Milan
    J. Math., \textbf{76} (2008), 27--67.

\bibitem{CY}
    A. Chang, P. Yang,
    \emph{On uniqueness of an n-th
    order differential equation in conformal geometry}, Math. Res.
    Lett.,  \textbf{4} (1997), 91--102.

\bibitem{CL1}
    W. Chen, C. Li,
    \emph{A priori estimates for
    prescribing scalar curvature equations}, Ann. of Math.,
    \textbf{145} (1997), 547--564.

\bibitem{CL2}
    W. Chen, C. Li,
    \emph{Regularity of
    Solutions for a system of Integral Equations}, Commun. Pure Appl.
    Anal., \textbf{4} (2005), 1--8.

\bibitem{CL-DCDS}
    W. Chen, C. Li,
    \emph{An integral system and the Lane-Emden conjecture},
    Discrete Contin. Dyn. Syst., \textbf{24} (2009), 1167--1184.

\bibitem{CL3}
      W. Chen, C. Li,
      \emph{Super polyharmonic property of solutions for
      PDE systems and its applications}, Commun. Pure Appl. Anal., in press.
      arXiv:1110.2539.

\bibitem{CLO-CPDE}
    W. Chen, C. Li, B. Ou,
    \emph{Classification
    of solutions for a system of integral equations}, Comm.
    Partial Differential Equations, \textbf{30} (2005), 59--65.

\bibitem{ChLO}
    W. Chen, C. Li, B. Ou,
    \emph{Classification of solutions for an integral equation}, Comm. Pure Appl.
    Math., \textbf{59} (2006), 330--343.

\bibitem{DdMW}
    J. Davila, M. del Pino, M. Musso, J. Wei,
    \emph{Fast and slow decay solutions for supercritical elliptic
    problems in exterior domains}, Calc. Var. Partial Differential
    Equations, \textbf{32} (2008), 453--480.


\bibitem{GNN}
    B. Gidas, W.-M. Ni, L. Nirenberg,
    \emph{Symmetry of positive solutions
    of nonlinear elliptic equations in $R^{n}$} (collected in the
    book \emph{Mathematical Analysis and Applications}, which is vol. 7a
    of the book series \emph{Advances in Mathematics. Supplementary
    Studies}, Academic Press, New York, 1981.)

\bibitem{Hang}
    F. Hang,
    \emph{On the integral systems related to Hardy-Littlewood-sobolev inequality},
    Math. Res. Lett., \textbf{14} (2007), 373--383.

\bibitem{JL}
      C. Jin, C. Li,
      \emph{Qualitative analysis of
    some systems of integral equations}, Calc. Var. Partial Differential
    Equations, \textbf{26} (2006), 447--457.

\bibitem{Lei}
      Y. Lei,
      \emph{Asymptotic properties of positive solutions of
      the Hardy-Sobolev type equations},
      J. Differential Equations, \textbf{254} (2013) 1774--1799.

\bibitem{LeiLi}
      Y. Lei, C. Li,
      \emph{Sharp criteria of Liouville type for some nonlinear systems},
      arXiv:1301.6235, 2013.

\bibitem{LLM}
      Y. Lei, C. Li, C. Ma,
      \emph{Decay estimation for positive solutions of a
      $\gamma$-Laplace equation}, Discrete Contin. Dyn. Syst.,
      \textbf{30} (2011), 547--558.

\bibitem{LLM-CV}
      Y. Lei, C. Li, C. Ma,
      \emph{Asymptotic radial symmetry and growth estimates of
      positive solutions to weighted Hardy-Littlewood-Sobolev system},
      Calc. Var. Partial Differential
      Equations., \textbf{45} (2012), 43--61.


\bibitem{Li}
     C. Li,
     \emph{Local asymptotic symmetry of singular
    solutions to nonlinear elliptic equations}, Invent. Math.,
    \textbf{123} (1996), 221--231.

\bibitem{Li2011}
     C. Li,
     \emph{A degree theory approach for the shooting method}, arXiv:1301.6232, 2013.

\bibitem{YiLi}
    Y. Li,
    \emph{Asymptotic behavior of positive solutions of equation $\Delta u+K(x)u^p=0$ in $R^n$},
    J. Differential Equations, \textbf{95} (1992), 304--330.

\bibitem{YLi}
    Y. Li,
    \emph{Remark on some conformally invariant integral equations:
    the method of moving spheres}, J. Eur. Math.
    Soc., \textbf{6} (2004), 153--180.

\bibitem{Lieb}
    E. Lieb,
    \emph{Sharp constants in the Hardy-Littlewood-Sobolev and
    related inequalities}, Ann. of Math., \textbf{118} (1983),
    349--374.

\bibitem{Lin}
    C. Lin,
    \emph{A classification of solutions of a conformally invariant
    fourth order equation in $R^n$}, Comm. Math. Helv., \textbf{73} (1998),
    206--231.

\bibitem{LGZ}
    J. Liu, Y. Guo, Y. Zhang,
    \emph{Liouville-type theorems for polyharmonic systems in $R^n$},
    J. Differential Equations, \textbf{225} (2006), 685--709.

\bibitem{LLD}
    Y. Liu, Y. Li, Y. Deng,
    \emph{Separation property of solutions for a semilinear elliptic equation},
    J. Differential Equations, \textbf{163} (2000), 381-406.


\bibitem{M-DIE}
     E. Mitidieri,
     \emph{Nonexistence of positive solutions of semilinear elliptic systems in $R^n$},
     Differential Integral Equations, \textbf{9} (1996), 465--479.

\bibitem{SZ-DIE}
     J. Serrin, H. Zou,
     \emph{Non-existence of positive solution of Lane-Emden systems},
     Differential Integral Equations, \textbf{9} (1996), 635--653.

\bibitem{Souplet}
     P. Souplet,
     \emph{The proof of the Lane-Emden conjecture in 4 space dimensions},
     Adv. Math., \textbf{221} (2009), 1409--1427.

\bibitem{SL}
     S. Sun, Y. Lei,
     \emph{Fast decay estimates for integrable solutions of the Lane-Emden
     type integral systems involving the Wolff potentials},
     J. Funct. Anal., \textbf{263} (2012), 3857--3882.

\bibitem{WX}
    J. Wei and X. Xu,
    \emph{Classification of solutions of
    higher order conformally invariant equations}, Math. Ann.,
    \textbf{313} (1999), 207--228.


\bibitem{Z-Tran}
     H. Zou,
     \emph{Symmetry of ground states for a semilinear elliptic system},
     Trans. Amer. Math. Soc., \textbf{352} (2000), 1217--1245.

\end{thebibliography}
\end{document}